\newtheorem{theorem}{Theorem}
\newtheorem{lemma}[theorem]{Lemma}
\newtheorem{proposition}[theorem]{Proposition}
\newtheorem{corollary}[theorem]{Corollary}
\theoremstyle{definition}
\newtheorem{definition}[theorem]{Definition}
\newtheorem{example}[theorem]{Example}
\theoremstyle{remark}
\numberwithin{theorem}{section}
\numberwithin{equation}{section}
\newcommand{\F}{\mathcal{F}}
\newcommand{\pr}{\mathbf{pr}}
\renewcommand{\Pr}{\mathbf{Pr}}
\renewcommand{\O}{\mathcal{O}}
\newcommand{\G}{\mathcal{G}}
\newcommand{\Id}{\mathtt{Id}}
\renewcommand{\H}{\mathcal{H}}
\newcommand{\N}{\mathcal{N}}
\newcommand{\K}{\mathcal{K}}
\newcommand{\s}{\mathbf{s}}
\newcommand{\from}{\leftarrow}
\renewcommand{\t}{\mathbf{t}}
\DeclareMathOperator{\Hom}{Hom}
\newcommand{\isom}{\cong}
\newcommand{\inv}{^{-1}}
\newcommand{\m}{\mathbf{m}}
\newcommand{\C}{\mathcal{C}}
\renewcommand{\u}{\mathbf{u}}
\renewcommand{\i}{\mathbf{i}}
\newcommand{\g}{\mathfrak{g}}
\renewcommand{\til}{\widetilde}
\newcommand{\X}{\mathcal{X}}
\newcommand{\Y}{\mathcal{Y}}
\renewcommand{\L}{\mathcal{L}}
\renewcommand{\P}{\mathcal{P}}
\newcommand{\T}{\mathbb{T}}
\newcommand{\B}{\mathbf{B}}
\newcommand{\Man}{\mathtt{Man}}
\newcommand{\Isom}{\cong}
\newcommand{\DMan}{\mathtt{DMan}}
\title{Poisson manifolds and their associated stacks}
\author{Joel Villatoro}
\thanks{The author was partially supported by an AGEP-GRS fellowship under NSF grant DMS 130847.}
\email{villato2@illinois.edu}
\begin{document}
\begin{abstract}
We associate to any integrable Poisson manifold a stack, i.e.~a category fibered in groupoids over a site.
The site in question has objects Dirac manifolds and morphisms pairs consisting of a smooth map and a closed 2-form.
We show that two Poisson manifolds are symplectically Morita equivalent if and only if their associated stacks are isomorphic.
\end{abstract}
\maketitle
\bibliographystyle{spmpsci}      
\section{Introduction}
In differential geometry, differentiable stacks provide models for singular spaces.
Intuitively, a differentiable stack generalizes the notion of manifold, where atlas are replaced by Lie groupoids.
Two Lie groupoids define the same stack if they are Morita equivalent.
The stack itself can be thought of as the orbit space of a Lie groupoid, but in reality it encondes all the transversal geometry of the leaves.

It has long been known that a Poisson manifold has an associated Lie algebroid.
When this algebroid is integrable, the corresponding source 1-connected Lie groupoid is a  \emph{symplectic} groupoid.
Conversely, the space of objects of any symplectic groupoid has a natural Poisson structure.
In~\cite{Morping}, Ping Xu introduced a notion of \emph{symplectic} Morita equivalence for symplectic groupoids.
This paper answers the following question:
\begin{itemize}
\item[(Q)] What is the notion of stack associated with a symplectic groupoid and symplectic Morita equivalences?
\end{itemize}
Notice that since symplectic Morita equivalence is more strict than ordinary Morita equivalence, the answer to this question \emph{is not} the (ordinary) stack associated with the underlying Lie groupoid.

In order to explain our answer to this question, let us recall that in the study of morphisms and isomorphisms of differentiable stacks one uses three, roughly equivalent, languages, summarized in the following table.
\begin{center}
\begin{tabular}{|c|c|}
\hline\noalign{\smallskip}
                    & objects: Lie groupoids \\
Principal Bundles   & morphisms: left principal bibundles \\
                    & equivalences: principal bibundles \\
\noalign{\smallskip}\hline\noalign{\smallskip}
                    & objects: Lie groupoids \\
Calculus of Fractions & morphisms: formal factions \(\frac{F: \G' \to \G}{G:\G' \to \H{} }\)  \\
                    & equivalences: a pair of weak equivalences \(F/G\) \\
\hline
                    & objects: categories fibered in groupoids \\
Fibered Categories                 & morphisms: Fiber preserving functors  \\
                    & equivalence: equivalence of categories \\
\noalign{\smallskip}\hline\end{tabular}
\end{center}
\vspace{10pt}

In Xu's work~\cite{Morping}, two symplectic groupoids are Morita equivalent if there exists a \emph{symplectic} principal bibundle between them.
Hence, this notion is a natural extension to the symplectic setting of the notion of equivalence in the first language above.
However, extensions to the symplectic setting of many other concepts listed in this table seem to be absent from the literature, and are relevant to answer our main question.
For example:%
\begin{enumerate}[(Q1)]
\item What is a left principal bibundle of symplectic groupoids?
\item What is a weak equivalence of symplectic groupoids?
\item What is the fibered category associated with a symplectic groupoid?
\end{enumerate}
In this paper we give answers to these questions.
In particular, the answer to the last question will provide our answer to (Q) above: we will show that one can associate to a symplectic groupoid a fibered category over a certain site that incorporates also the symplectic geometry.

Although we will be mostly concerned with symplectic groupoids, and hence integrable Poisson manifolds, we will also consider non-integrable Poisson manifolds.
Our answer to the questions above lead to a natural notion of infinitesimal symplectic Morita equivalence, which is valid for any Poisson manifold.
So ultimately we will be able to provide an answer to the more general question:
\begin{itemize}
\item[(Q')] What is the notion of stack associated with a Poisson manifold?
\end{itemize}

We will proceed as follows.
In Section~\ref{section:dman} we establish our notation and we introduce a fundamental notion in our work: $\DMan$, the \emph{site of Dirac manfolds}. The reason for working in the more general setting of Dirac geometry is that Dirac manifolds are much better behaved categorically than Poisson manifolds (e.g., they admit pull backs).

In Section~\ref{section:dlie} we introduce a new object called a \emph{D-Lie groupoid}.
Briefly, a D-Lie groupoid is just a groupoid internal to $\DMan$.
Intuitively, they can be thought of as `pseudo-integrations' of Dirac structures.
We will see that the notion of D-Lie groupoid captures many interesting phenomena.
For example, Poisson manifolds, symplectic orbifolds, Hamiltonian \(G\) spaces, and integrable systems, all give rise to natural examples of D-Lie groupoids.
We will also introduce the infinitesimal version of a D-Lie groupoid, a \emph{D-Lie algebroid}.

In Section~\ref{section:principalgbundles} we consider principal D-Lie groupoids bundles which allows us to introduced the notions of Morita equivalence, morphism and weak equivalence of D-Lie groupoids.
This provides the answers to (Q1) and (Q2) above.

In Section~\ref{section:stacks} we discuss stacks over $\DMan$. If $\G$ is a D-Lie groupoid we define $\B\G$ as the fibered category over $\DMan$ consisting of all principal D-Lie $\G$-bundles.
We show that the functor $\B$ relates a D-Lie groupoid with a (presentable) stack over $\DMan$, similar to the usual presentation of differentiable stack by Lie groupoids.
Since symplectic groupoids are examples of D-Lie groupoids, this provides an answer to (Q3). Moreover, we prove the following result which provides the bridge with the notion of symplectic Morita equivalence that one finds in the literature:

\begin{theorem}\label{thm:main1}
Let $\G$ and $\H$ be symplectic groupoids.
The following are equivalent:
\begin{enumerate}[(1)]
\item $\G$ and $\H$ are symplectically Morita equivalent.
\item $\B\G$ is isomorphic to $\B\H$.
\item There exists a principal $(G,\H)$-bibundle of D-Lie groupoids.
\item There exists a pre-symplectic groupoid $\G'$ and a pair of weak equivalences of D-Lie groupoids $\G \from \G' \to \H$.
\end{enumerate}
\end{theorem}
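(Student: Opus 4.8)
The plan is to prove a cycle of implications together with one auxiliary equivalence, treating condition (3) --- the existence of a principal D-Lie bibundle --- as the hub through which everything else is routed. Concretely I would establish (1) $\Leftrightarrow$ (3), then (3) $\Rightarrow$ (2) $\Rightarrow$ (3), and finally (3) $\Leftrightarrow$ (4). Since (1) is Xu's classical notion, phrased in the language of symplectic bibundles, while (2), (3), (4) are precisely the three languages recalled in the introductory table, now adapted to the site $\DMan$, the core of the argument is to show that these Dirac-geometric reformulations all carry the same information as a symplectic Morita equivalence. The virtue of this arrangement is that, once one is working internally to $\DMan$, the equivalences among (2), (3), (4) are formal consequences of the theory of principal bundles and the presentability of $\B\G$ developed in Sections~\ref{section:principalgbundles} and~\ref{section:stacks}, so the only genuinely new content is the bridge to the classical definition.

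For (1) $\Leftrightarrow$ (3) I would show that, when $\G$ and $\H$ are symplectic groupoids, the data of a principal $(\G,\H)$-bibundle of D-Lie groupoids is exactly the data of a symplectic bibundle in the sense of~\cite{Morping}. A symplectic bibundle carries a closed $2$-form compatible with the multiplicative symplectic forms on both sides; one checks that this is precisely the form that a morphism in $\DMan$ attaches to the bibundle, and that the two biprincipality conditions coincide. I expect this step to be the main obstacle: the delicate point is the interplay between nondegeneracy and the Dirac structure, so the symplectic (rather than merely pre-symplectic) nature of $\G$ and $\H$ must be matched carefully against the defining conditions of a D-Lie bibundle, and verifying that the induced $2$-form is the correct one --- and that its closedness together with the moment-map compatibilities is equivalent to principality in $\DMan$ --- is where the real work lies.

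The implication (3) $\Rightarrow$ (2) I would obtain from the associated-bundle (tensor product) construction: a principal bibundle $P$ defines a functor between $\B\G$ and $\B\H$ sending a bundle $Q$ to $P \otimes_\H Q$, with quasi-inverse furnished by the opposite bibundle $P^{\mathrm{op}}$, and one verifies these are mutually inverse isomorphisms of fibered categories over $\DMan$. For the converse (2) $\Rightarrow$ (3) I would invoke presentability of $\B\G$: evaluating an isomorphism $\B\G \isom \B\H$ on the tautological $\G$-bundle over the unit space produces an object of $\B\H$ that simultaneously carries commuting principal actions of both groupoids, i.e.\ a bibundle, and the stack isomorphism forces biprincipality. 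This is the usual Yoneda/presentation argument, now carried out inside $\DMan$, so the only additional care needed is to confirm that every map and form produced along the way lives in the Dirac site.

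Finally, for (3) $\Leftrightarrow$ (4) I would pass between the bibundle and calculus-of-fractions pictures. Given a principal bibundle $P$, the action groupoid over $P$ is a pre-symplectic groupoid $\G'$ admitting projections that are weak equivalences onto $\G$ and $\H$, yielding the span $\G \from \G' \to \H$; conversely, a span of weak equivalences composes, via the bibundle associated with each leg together with the tensor product, to a principal $(\G,\H)$-bibundle. The point to watch is that $\G'$ is only \emph{pre-symplectic} and not symplectic --- the weak equivalence can introduce degeneracy in the form along the fibers of $P$ --- which is exactly why condition (4) must be stated with a pre-symplectic groupoid in the middle rather than a symplectic one.
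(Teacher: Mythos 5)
Your proposal is correct and takes essentially the same approach as the paper: the paper deduces Theorem~\ref{thm:main1} from Proposition~\ref{prop:symform} (the correspondence between symplectic Morita equivalences and target-aligned principal D-Lie bibundles, whose entire content is the nondegeneracy argument you flag as the main obstacle) together with Theorem~\ref{thm:stackequiv} (the equivalence of (2), (3), (4), proved by the same tensor-product, Yoneda/fiber-product, and pullback-groupoid arguments you outline). The only cosmetic difference is that the paper's middle groupoid in (4) is the pullback of $\H$ along $\s^P$, namely $P \times_{\s^P,\t^\H} \H \times_{\s^\H,\s^P} P$, which is canonically isomorphic to the double action groupoid you call the ``action groupoid over $P$.''
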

%

Our results show that stacks over $\DMan$ provide an appropriate categorical formalism for Poisson geometry.
For example, the `separated' stacks associated to D-Lie groupoids are closely related to the Poisson manifolds of compact type of Crainic, Fernandes, and Martinez-Torres \cites{PMCT1,PMCT2}.
Moreover, the forgetful functor $\DMan \to \Man$ extends naturally to a functor from (presentable) stacks over $\DMan$ to (presentable) stacks over $\Man$.
Hence, one may think of a presentable stack over $\DMan$ as a singular object of $\Man$ equipped with additional geometry.

Moving now to non-integrable Poisson manifolds, if  $A\to M$ is any Lie algebroid we will denote by $\Sigma(A)$ the corresponding source 1-connected topological groupoid, consisting of $A$-paths modulo $A$-homotopies.
Recall that $A$ is an integrable Lie algebroid if and only if $\Sigma(A)$ is a Lie groupoid, and that any morphism $F: A \to B$ of Lie algebroids integrates to a groupoid morphism $\F: \Sigma(A) \to \Sigma(B)$ (see, e.g.,~\cite{Cint}). In Section~\ref{section:infweakequivalences} we give the following infinitesimal characterization of weak equivalences:
\begin{theorem}\label{thm:infweakequiv}
Let $F: A \to B$ be a morphism of integrable Lie algebroids.
The corresponding morphism $\F: \Sigma(A) \to \Sigma(B)$ is a weak equivalence if and only if the following hold for all $x \in M$:
\begin{enumerate}[(a)]
\item $F$ induces a homeomorphism of orbit spaces;
\item $F$ is transverse;
\item the map of isotropy algebras $F_x: \g_x \to \g_{f(x)}$ is an isomorphism;
\item the map of monodromy groups $\N_x(A) \to \N_x(B)$ is an isomorphism;
\item the map of fundamental groups $\pi_1(\O_x) \to \pi_1(\O_{f(x)})$ is an isomorphism.
\end{enumerate}
\end{theorem}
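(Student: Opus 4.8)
The plan is to reduce the claim to the standard geometric description of a weak (Morita) equivalence of Lie groupoids and then to read off each factor from the structure of the source $1$-connected integration. Writing $f\colon M\to N$ for the base map of $\F\colon\Sigma(A)\to\Sigma(B)$, recall that $\F$ is a weak equivalence exactly when it is \emph{fully faithful}, meaning that the square with vertical legs $(\s,\t)$ and horizontal legs $\F$ and $f\times f$ is a pullback of manifolds, and \emph{essentially surjective}, meaning that the map
\[
\Sigma(B)_1\,{}_{\s}\!\times_f M\longrightarrow N,\qquad (h,p)\longmapsto \t(h),
\]
is a surjective submersion. I would first record this characterization (it is the infinitesimal shadow of the bibundle picture of Section~\ref{section:principalgbundles}) and observe that, since the source fibre $\s_{\Sigma(A)}^{-1}(x)$ is the simply connected total space of the principal isotropy bundle $\t\colon \s_{\Sigma(A)}^{-1}(x)\to\O_x$ with group $G_x$, both conditions are controlled by the isotropy groups together with the leafwise behaviour of $f$.

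The structural input is the description of the isotropy group $G_x$ of $\Sigma(A)$. From the homotopy exact sequence of the bundle above (whose total space is simply connected) one obtains the exact sequence
\[
1\longrightarrow \widetilde{G}(\g_x)/\N_x(A)\longrightarrow G_x\longrightarrow \pi_1(\O_x,x)\longrightarrow 1 ,
\]
in which $\widetilde{G}(\g_x)$ is the simply connected integration of the isotropy algebra $\g_x$ and the monodromy group $\N_x(A)$ is the discrete central kernel cutting out the identity component $G_x^0=\widetilde{G}(\g_x)/\N_x(A)$. Since $\F$ intertwines anchors it carries this sequence for $A$ to the one for $B$, so a diagram chase shows that $\F$ induces an isomorphism $G_x\to G_{f(x)}$ for every $x$ if and only if conditions (c), (d) and (e) hold simultaneously; this is the algebraic heart of the argument.

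It remains to match the two halves of the weak-equivalence condition with the geometric conditions (a), (b). Restricting the pullback square to a source fibre turns full faithfulness into the requirement
\[
\s_{\Sigma(A)}^{-1}(x)\;\isom\;\s_{\Sigma(B)}^{-1}(f(x))\times_{\t,\,N,\,f}M ,
\]
and I would argue that, \emph{as a bijection of sets}, this is equivalent to the isotropy maps being isomorphisms (by the previous step) together with injectivity of $f$ on orbit spaces, while the identification being a \emph{diffeomorphism} is exactly transversality of $f$ to the orbit foliation of $B$, i.e.\ condition (b). Essential surjectivity is handled in parallel: the displayed map is a submersion precisely when $f$ is transverse (b), and it is surjective precisely when $f$ is onto on orbit spaces. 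Combining injectivity on orbits (from full faithfulness) with surjectivity on orbits (from essential surjectivity), and upgrading the resulting bijection to a homeomorphism via the submersion estimates, yields condition (a); conversely, granting (a)--(e) the same computations rebuild both halves.

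I expect the main obstacle to lie in the smoothness bookkeeping rather than in the set-level statements: one must verify that transversality (b) is precisely what makes the naive fibre product $\Sigma(B)_1\times_{N\times N}(M\times M)$ into a manifold onto which $\F$ restricts as a local diffeomorphism, and one must check that the leafwise map $f|_{\O_x}\colon\O_x\to\O_{f(x)}$ interacts correctly with the isotropy bundles so that the monodromy and fundamental-group conditions (d), (e) genuinely control the fibre. A secondary subtlety is promoting the bijection of orbit spaces to a homeomorphism, for which I would pass to transversal slices to the two foliations, where all five conditions become local and transparent.
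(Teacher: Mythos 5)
Your proposal is correct and follows essentially the same route as the paper: the paper's Lemma~\ref{lemma:justisolemma} reduces weak equivalence of $\F$ to transversality, a homeomorphism of orbit spaces, and isomorphisms of isotropy groups (phrased via principality of the bibundle $P_\F$, which is equivalent to your ``fully faithful plus essentially surjective'' formulation), and the exact sequences of Section~\ref{subsection:monodromy} then split the isotropy condition into (c), (d), (e) exactly as your argument does. The only difference is cosmetic: the paper runs the isotropy analysis through two separate short exact sequences (one for the monodromy and identity component, one for the component group and $\pi_1(\O_x)$), whereas you package both into the single homotopy exact sequence of the source fibration.
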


Notice that conditions (a-e) can be stated purely in terms of the Lie algebroid morphism $F: A \to B$, so this result suggests a definition of infinitesimal weak equivalences of Lie algebroids, and hence also of \emph{any} D-Lie algebroids.
Similarly, by treating a Dirac structure as a D-Lie algebroid, we are able to propose a definition of weak equivalence of Dirac manifolds: a morphism of their associated \mbox{D-Lie} algebroids whose underlying Lie algebroid morphism satisfies (a-e). This definition leads to our next result:
\begin{theorem}\label{thm:main2}
Suppose $M$ and $N$ are integrable Poisson manifolds.
Let $\Sigma(M)$ and $\Sigma(N)$ be their source simply connected integrations.
Then $\Sigma(M)$ and $\Sigma(N)$ are Morita equivalent if and only if there exists a Dirac manifold $X$ and pair of weak equivalences $M \from X \to N$.
\end{theorem}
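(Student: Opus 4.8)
The plan is to derive this statement from the two preceding theorems. Theorem~\ref{thm:main1} characterizes symplectic Morita equivalence of $\Sigma(M)$ and $\Sigma(N)$ through the existence of a span of weak equivalences of D-Lie groupoids, while Theorem~\ref{thm:infweakequiv} translates a weak equivalence between \emph{source-simply-connected} integrations into exactly the infinitesimal conditions (a)--(e) that define a weak equivalence of Dirac manifolds. The whole argument is thus a passage back and forth between the groupoid and the D-Lie algebroid pictures, and the only delicate point is that Theorem~\ref{thm:infweakequiv} demands source-simple-connectivity on \emph{both} sides.

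For the implication $(\Leftarrow)$, suppose we are handed weak equivalences of Dirac manifolds $M \from X \to N$, i.e.\ D-Lie algebroid morphisms $L_X \to T^*M$ and $L_X \to T^*N$ whose underlying Lie algebroid morphisms satisfy (a)--(e). First I would check that $X$ is integrable: integrability is governed by the monodromy groups (Crainic--Fernandes), and condition (d) identifies $\N_x(L_X)$ with $\N_{f(x)}(T^*M)$, while (a), (b), (c) make this identification vary continuously in $x$; since $T^*M$ is integrable its monodromy groups are locally uniformly discrete, and this discreteness transfers to $L_X$. Hence $\Sigma(X)$ is a genuine pre-symplectic groupoid. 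I would then integrate the two algebroid morphisms (Lie's second theorem for the source-simply-connected integration) to D-Lie groupoid morphisms $\Sigma(X) \to \Sigma(M)$ and $\Sigma(X) \to \Sigma(N)$; because all three groupoids are now source-simply-connected and (a)--(e) hold, Theorem~\ref{thm:infweakequiv} shows these are weak equivalences. The span $\Sigma(M) \from \Sigma(X) \to \Sigma(N)$ then realizes condition (4) of Theorem~\ref{thm:main1}, and so $\Sigma(M)$ and $\Sigma(N)$ are symplectically Morita equivalent.

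For $(\Rightarrow)$, Theorem~\ref{thm:main1} produces a principal $(\Sigma(M),\Sigma(N))$-bibundle $P$ of D-Lie groupoids. The naive move would be to differentiate the span coming from condition (4), but its middle groupoid need not be source-simply-connected, and then Theorem~\ref{thm:infweakequiv} does not apply; indeed a weak equivalence onto a source-simply-connected groupoid can fail to induce (d) or (e) on differentials (think of the gauge groupoid of a nontrivial bundle, which is weakly equivalent to its structure group yet sees extra $\pi_1$ and $\pi_2$ in its base). This is the main obstacle. I would resolve it by taking for the middle term the pullback (linking) groupoid $\Gamma := P \times_M \Sigma(M) \times_M P$ formed along the left moment map $a\colon P \to M$, with base $X := P$ carrying its induced Dirac structure $L_X$. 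Its two canonical projections to $\Sigma(M)$ and, via biprincipality of $P$, to $\Sigma(N)$ are both weak equivalences; and its source fiber over $p$ fibers over the source fiber $s^{-1}(a(p))$ of $\Sigma(M)$ with fibers the $a$-fibers $\cong$ source fibers of $\Sigma(N)$. As both are simply connected, the long exact homotopy sequence forces the source fibers of $\Gamma$ to be simply connected, so $\Gamma = \Sigma(X)$ is source-simply-connected. Differentiating the two projections and invoking Theorem~\ref{thm:infweakequiv} --- now legitimately, since all three groupoids are source-simply-connected --- shows that $L_X \to T^*M$ and $L_X \to T^*N$ satisfy (a)--(e); that is, $M \from X \to N$ is the desired pair of weak equivalences of Dirac manifolds.

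In summary, the decisive observation is that the linking groupoid of the bibundle supplies a source-simply-connected representative of the span \emph{for free}, which is what makes Theorem~\ref{thm:infweakequiv} usable; the one remaining technical point I would isolate as a lemma is the transfer of integrability across (a)--(e) in the backward direction, where the content is not the abstract isomorphism of monodromy groups but the preservation of their local uniform discreteness.
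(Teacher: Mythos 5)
Your proof is correct, and its skeleton coincides with the paper's: both directions are routed through Theorem~\ref{thm:main1} and Theorem~\ref{thm:infweakequiv}, and in the forward direction your linking groupoid $\Gamma = P\times_M \Sigma(M)\times_M P$ over $X:=P$ is exactly the pullback groupoid the paper uses; your homotopy-exact-sequence argument (with the $a$-fibers identified with source fibers of $\Sigma(N)$ via principality of the right action) in fact justifies a step the paper merely asserts, namely that the moment-map fibers are simply connected. The one genuine divergence is how integrability of $X$ is obtained in the backward direction. The paper observes that conditions (a)--(e) force $L_X \isom f^{!}L_M$, the pullback algebroid, which is integrated by the pullback groupoid $X\times_{f,\t}\Sigma(M)\times_{\s,f}X$; integrability is then immediate, and this single observation also powers the forward direction, so one device runs the whole proof. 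Your alternative --- transferring the Crainic--Fernandes local uniform discreteness of monodromy through (c) and (d) --- does work, since the isomorphisms $\N_x(L_X)\to\N_{f(x)}(L_M)$ are induced by $F$, whose restriction to isotropy is a smoothly varying isomorphism, so discreteness radii are locally comparable along the continuous map $f$; but it is heavier, and as you concede it remains a lemma to be proved, whereas the pullback-algebroid identification makes it free. One small correction: a nontrivial gauge groupoid admits no groupoid homomorphism onto its structure group (such a homomorphism would trivialize the bundle), so your parenthetical example does not parse as stated; the pair groupoid of a non-simply-connected manifold mapping to a point, whose differential $TB\to 0$ fails condition (e), makes the intended point.
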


Since the infinitesimal version of weak equivalence is perfectly well defined for non-integrable manifolds, we have a natural version of Morita equivalence valid also for non-integrable Poisson manifolds: two Poisson manifolds $M$ and $N$ are \emph{infinitesimally Morita equivalent} if there exists Dirac manifolds $ \{ X^i \}$ and a chain of weak equivalences
\[ M \from X^1 \to X^2 \from \cdots \to X^{n-1} \from X^n \to N \, . \]
When $M$ and $N$ are integrable, this coincides with ordinary Morita equivalence of Poisson manifolds.

In~\cite{BStacky}, Burzstyn, Noseda, and Zhu characterized principal bundles for \emph{stacky} groupoids, which are the geometric objects that `integrate' non-integrable algebroids.
This also gives rise to a natural definition of Morita equivalence of non-integrable algebroids, which we call BNZ equivalence:  two algebroids are BNZ equivalent if and only if there exists a principal bibundle of their stacky (holonomy) integrations.
We conjecture that infinitesimal Morita equivalence and BNZ equivalence coincide.
This will be discussed in a upcoming paper.

Finally, although the main focus of this paper are Poisson manifolds and their associated symplectic groupoids, our results extend to Dirac structures with a background 3-form and their associated presymplectic groupoids.

\textbf{Acknowledgements} The author would like to thank his thesis advisor Rui Loja Fernandes for his guidance and support throughout the preparation of this document.
The author would also like thank Eugene Lerman and Matias del Hoyo for several comments and discussions related to this work. Finally, he is very grateful for the anonymous referee's constructive feedback.
\section{Preliminaries}\label{section:dman}
Poisson tensors are not well behaved in a category theoretic sense.
For example, given a submersion $f: N \to M$, and a Poisson structure on $M$, we generally cannot pull-back the Poisson structure on $M$ to $N$.
We can interpret this problem geometrically:
If we think of a Poisson manifold, $M$ as a singular foliation by symplectic manifolds, then we can pull back this foliation along any submersion.
However, the leaves of the resulting foliation will no longer be symplectic manifolds but \emph{pre}symplectic manifolds: they form a Dirac structure.
\subsection{Dirac structures}
Intuitively, Dirac manifolds are manifolds equipped with a singular foliation by \emph{pre}symplectic manifolds.
We will provide a brief overview of Dirac structures and Lie groupoids in order to establish our notation conventions.
For a more detailed discussion of Dirac structures see~\cite{BDiracintro}\cite{MMbook}.

Given a smooth manifold, $M$, we call $\T M = TM \oplus T^* M$ the \emph{generalized tangent bundle} of $M$.
Elements of $\T_x M$ will be denoted by pairs $v \oplus \eta$ where $v \in T_x M$ and $\eta \in T_x^* M$.
Generally we will use $\eta$ and $\zeta$ to denote 1-forms or co-vectors while $\alpha$ and $\beta$ will denote 2-forms.

The generalized tangent bundle comes with a natural symmetric product and a bracket operation.
A Dirac structure on $M$ is an involutive, maximally isotropic, subbundle $L \le \T M$.
On such subbundles, the $\T M$-bracket makes $L$ into a Lie algebroid.
We will denote the resulting Lie bracket on sections of $L$ by square brackets ${[\cdot,\cdot]}_L$.
Furthermore, $\rho_L$ will denote the anchor map $v \oplus \eta \mapsto v$. The pair $(M,L)$ is called a \emph{Dirac manifold}.
The orbits (or leaves) of $L$, are the maximal submanifolds integrating the singular distribution $\rho_L(L)$. Every orbit $\O$ of a Dirac structure comes with a closed 2-form, denoted $\omega^{\mathcal{O}}$.

Poisson manifolds are a special case of Dirac manifolds where $L$ is given by the graph of a Poisson bivector.
Given a manifold $M$ equipped with a Poisson bivector $\pi$, we will denote the associated Dirac structure by $L_\pi$.
Similarly, the graph of any closed 2-form $\omega$ defines a Dirac structure which we denote by $L_\omega$.
\subsection{The category of Dirac manifolds}
Given a smooth map $f: N \to M$ and a Dirac structure $L_M$, we can define a pullback operation where
\begin{equation}\label{eqn:diracpullback}
{(f^* L_M)}_x := \{  w \oplus f^* \eta : \dif f (w) \oplus \eta \in {(L_N)}_{f(x)} \} \, .
\end{equation}
Geometrically this corresponds to pulling back the foliation on $M$ and the associated leafwise 2-forms.
Unfortunately this construction does not always produce a Dirac structure: $f^* L_M$ may not be a smooth vector  subbundle.
However, if $f: N \to M$ is transverse to the orbits of $L_M$ then $f^* L_M$ is always a well defined Dirac structure~\cite{BDiracintro}.
In particular, Dirac structures can always be pulled back along submersions.

Now suppose $(M,L_M)$ is a Dirac manifold and $\beta \in \Omega^2(M)$ is a closed 2-form on $M$.
The \emph{gauge transform} of $L_M$ by $\beta$ is the Dirac structure  $L_M + \beta$ defined at each $x \in M$ by:
\begin{equation}\label{eqn:gaugetransform}
{(L_M + \beta)}_x := \{ v \oplus (\eta + \iota_v(\beta)) \in \mathbb{T}M_x : v \oplus \alpha \in L_M.  \}
\end{equation}
Geometrically, this corresponds to adding to each leafwise 2-form $\omega^{\mathcal{O}}$ the pullback of $\beta$ to $\mathcal{O}$.
We can now introduce our main category of study.
\begin{definition}
The \emph{category of Dirac Manifolds} $\DMan$ is defined as follows:
\begin{itemize}
\item the \emph{objects} of $\DMan$ are Dirac manifolds $(M,L_M)$;
\item the \emph{morphisms} of $\DMan$ are pairs $(f,\beta): (N , L_N) \to (M,L_M)$ where $f$ is smooth map such that $f^* L_M$ is a well defined Dirac structure on $N$ and $\beta \in \Omega^2(N)$ is a 2-form such that
\begin{equation}\label{eqn:diracmorphism}
f^*L_M = L_N + \beta.
\end{equation}
\end{itemize}
\end{definition}
Composition in this category is by the rule $(f , \beta) \circ (g, \alpha) = (f \circ g, \, g^* \beta + \alpha)$.
\begin{example}[Gauge Transformations]
Suppose $(M,L_M)$ is a Dirac manifold and $\beta$ is a closed 2-form on $M$, then $(\Id, \beta): (M,L_M) \to (M,L_M+\beta)$ is a morphism in $\DMan$.
We call such morphisms \emph{gauge transformations}.
\end{example}
\begin{example}[Smooth Maps]
Let $M$ and $N$ be any smooth manifolds.
Then $(M, TM)$ and $(N,TN)$ are Dirac manifolds.
For any smooth map $f: M \to N$ we have that $(f,0) : (M, TM) \to (N, TN)$ is a morphism in $\DMan$.
\end{example}
\begin{example}[Symplectic Leaves]
Suppose $(M, L_M)$ is a manifold and $L_M$ is the graph of a Poisson bivector.
Any orbit $\O$ of $M$ has an associated symplectic form $\omega^\O$ and the immersion $i:\O \to M$ satisfies $i^* L_M = L_{\omega^\O}$.
\end{example}
To simplify our notation we will sometimes denote a morphism $(f,\beta)$ in $\DMan$ by $f$ alone and the 2-form $\beta$ will be called the \emph{gauge part} of $f$.
Similarly we may sometimes denote a Dirac manifold $(M,L_M)$ by $M$ alone.
The notation $L_M$ will always denote the Dirac structure of $M$.
Lastly, if we say a morphism in $\DMan$ is a \emph{submersion} we mean that the underlying smooth map is a submersion.

The category $\DMan$ comes with a natural functor $\Pr_1 : \DMan \to \Man$ by projection to the first factor of each pair.
This functor is split by a fully faithful functor $\mathbf{i}: \Man \to \DMan$ which takes any manifold $M$ to the Dirac manifold $(M, TM)$ and any smooth map $f$ to $(f,0)$.

We can characterize commutative diagrams in $\DMan$ by considering the associated diagram in $\Man$ together with a \emph{gauge equation}.
For example, suppose we are given a triangle $T$ of morphisms in $\DMan$ as per (\ref{eqn:triangle}).
\begin{equation}\label{eqn:triangle}
T =
\begin{tikzcd}
 & M_2 \arrow[rd, "f_2"] &  \\
M_1 \arrow[ru, "f_1"] \arrow[rr,"f_3"] & & M_3
\end{tikzcd}
\end{equation}
The \emph{gauge part} of $T$ is the equation $\beta_1 + f_1^*\beta_2 = \beta_3$ (here $\beta_i$ is the gauge part of $f_i$).
More generally, any diagram $D$ in $\DMan$ comes with a set of gauge equations coming from each triangle in $D$.
We can see immediately that, $D$ is a commuting diagram if and only if $\Pr_1(D)$ commutes in $\Man$ and each gauge equation holds.

Suppose we are given two morphisms $(f,\beta): M \to X$ and $(g,\alpha): N \to X$ in $\DMan$ such that the manifold $M \times_X N$ exists.
Then the \emph{fiber product} is defined to be $M \times_X N$ where
\begin{equation}\label{eqn:fiberproduct}
L_{M \times_X N} := {(f \circ \pr_1)}^* L_X - \pr_1^* \beta - \pr_2^* \alpha .
\end{equation}
Such a fiber product fits into a corresponding pullback square in $\DMan$:
\[
\begin{tikzcd}
M \times_X N \arrow[r, "\pr_2"] \arrow[d, "\pr_1"]  & N  \arrow[d,"g"] \\
M \arrow[r, "f"] & X
\end{tikzcd}
\]
We take the gauge parts of $\pr_1$ and $\pr_2$ to be $\pr_2^* \alpha$ and $\pr_1^* \beta$ respectively.
Observe that such a fiber product always exists if either $f$ or $g$ is a submersion.

Fiber products in $\DMan$ still satisfy the same universal property.
Suppose we have the following diagram in $\DMan$:
\[
\begin{tikzcd}
Y  \arrow[dr, "k", dashrightarrow] \arrow[drr, "h_2", bend left] \arrow[ddr, "h_1" swap, bend right] & & \\
  & M \times_X N \arrow[r, "\pr_2"] \arrow[d, "\pr_1"]  & N  \arrow[d,"g"] \\
  & M \arrow[r, "f"] & X
\end{tikzcd}
\]
Let $\eta_1$ and $\eta_2$ be the gauge parts of $h_1$ and $h_2$, respectively.
Then the gauge equation arising from the outermost square is
\begin{equation}\label{eqn:universalprop}
h_1^* \beta + \eta_1 = h_2^* \alpha + \eta_2 \, .
\end{equation}
We already know that there is a unique smooth map $k:Y \to M \times_X N$ which makes this diagram commute.
We can define the gauge part of $k$, call it $\kappa$, one of two ways:
\[ \kappa + k^* \pr_1^* \beta = \eta_2 \quad \mbox{ or equivalently} \quad  \kappa + k^* \pr_2^* \beta = \eta_1 \, . \]
In the presence of (\ref{eqn:universalprop}), these definitions are equivalent.
They must hold in order for the diagram to commute since they represent the gauge equations of the top and left triangles created by inserting $k: Y \to M \times_X N$ into the diagram above.
Hence, $(k,\kappa)$ is the unique morphism which completes the diagram in $\DMan$.
\subsection{Groupoids and bibundles}
We will now briefly establish our notation for Lie groupoids and their principal bibundles.
A more detailed exposition on the subject can be found in~\cite{BPic,lecturesonint}.

We will denote a Lie groupoid by $\G \rightrightarrows M$, so $\G$ and $M$ are the manifolds of arrows and objects.
We will denote by $\s,\t,\u,\m,\i$ the source, target, unit, multiplication and inverse maps, respectively.
These maps satisfy the appropriate groupoid axioms and the source and target maps are submersions.
For each natural number $n>1$ we denote by $\G^{(n)}$ the manifold of $n$-tuples of composable arrows.

Given a Lie groupoid $\G \rightrightarrows M$ and a map $\t^P:P\to M$ a \emph{left $\G$-action} is specified by a map $\m_L: \G \times_M P \to P$ satisfying the usual axioms of an action.
If write $g \cdot p=\m_L(g,p)$ these can be written as:
\[ g_1\cdot (g_2\cdot p)=(g_1 \cdot g_2)\cdot p,\quad \u(\t^P(p))\cdot p=p.  \]
We say that $P$ is a \emph{left $\G$-bundle over $N$} is there is a submersion $\s^P:P\to N$ which is $\G$-invariant, i.e.:
\[ \s^P(g\cdot p)=\s^P(p),\quad \forall (g,p)\in \G \times_M P.  \]
The left $\G$-bundle is called \emph{principal} if the map $\G \times_M P \to P \times_N P$, $(g,p) \mapsto(g \cdot p, p)$, is a diffeomorphism.
We have similar notations for a \emph{right $\G$-action}.

Given Lie groupoids $\G \rightrightarrows M$ and $\H \rightrightarrows N$, a \emph{$(\G, \H)$-bibundle} is a manifold $P$ which is both a left $\G$-bundle over $N$ and a right $\H$-bundle over $M$, such that the two actions commute.
If $P$ is both left and right principal then $P$ is a called a \emph{principal $(\G, \H)$-bibundle}. Principal $(\G,\H)$-bundles are also known as \emph{Morita equivalences}.

Given a $(\G_1,\G_2)$-bibundle $P_1$ and a $(\G_2,\G_3)$-bibundle $P_2$, we can construct a tensor product $P_1 \otimes_{\G_2} P_2$ where
\[
P_1 \otimes P_2 := \frac{P_1 \times_{M_2} P_2}{(p_1 \cdot g_2, p_2) \sim (p_1, g_2 \cdot p_2)} \, .
\]
This composition is associative (up to bibundle isomorphism).
If $P$ is a $(\G,\H)$-bibundle then $\G \otimes P \isom P$ and $P \otimes \H \isom P$.
Lastly, $P$ is invertible (i.e.\ there exists $P\inv$ such that $P\inv \otimes P \isom \H$ and $P \otimes P\inv \isom \G$) if and only if $P$ is principal.
\begin{example}
Given a homomorphism of Lie groupoids $F: \H \to \G$ covering $f: M \to N$ then let $P_F$ be the left principal $(\G,\H)$-bibundle constructed as follows:

As a manifold $P_F = \H \times_{\s, f} N$.
The left and right actions on $P_F$ are given by:
\[ \m_L(g',(g,x)) = (g' g, x), \qquad \m_R((g , x), h) = (g F(h),\s(h)) \, . \]
The assignment $F \mapsto P_F$ is functorial in the sense that it satisfies
\begin{equation}
P_{F \circ G} \isom P_F \otimes P_G \, .
\end{equation}
\end{example}

Given a Lie algebroid $(A, {[ \cdot , \cdot ]}_A , \rho_A)$ we will use $\Sigma(A)$ to denote the canonical source simply connected integration of $A$.
When $A = L_M$ is a Dirac structure, the notation $\Sigma(L_M)$ denotes the canonical source simply connected pre-symplectic groupoid integrating $L_M$
(see~\cite{integrationoftwisteddiracbrackets} for a treatment of this construction).
\section{Groupoids in \texorpdfstring{$\DMan$}{}}\label{section:dlie}
\begin{definition}
In brief, a \emph{D-Lie groupoid} is a groupoid object internal to the category $\DMan$.
Hence, it is a pair of objects $\G$ and $M$ in $\DMan$ together with morphisms $\s,\t,\u,\m,\i$ satisfying the groupoid axioms (we also require that $\s$ and $\t$ be submersions).
\end{definition}

The notion of a D-Lie groupoid should not be confused with the notion of a multiplicative Dirac structure on a groupoid, the so-called Dirac groupoids  \cites{DiracLieGroups,Ortiz}, which include Poisson-Lie groups and Poisson groupoids as special cases.
In general, Dirac groupoids \emph{do not} (in any obvious way) provide examples of D-Lie groupoids.

Each groupoid axiom can be interpreted as a diagram in $\DMan$.
Hence, if we are supplied with Dirac manifolds $\G$ and $M$ and morphisms $\s, \t,\u, \m, \i$ then the resulting data is a D-Lie groupoid if and only if the projection under $\Pr_1$ is a Lie groupoid and the associated gauge equations of each groupoid axiom holds.
A \emph{homomorphism} of D-Lie groupoids is defined in the natural way:
it is a pair of morphisms, $F: \G \to \H$ and $f: M \to N$, in $\DMan$, such that $F$ is compatible with the source, target and multiplication maps.

The next lemma allows us to give a more geometric characterization of D-Lie groupoids.
\begin{lemma}\label{lemma:dliedata}
Let $\G$ be a Lie groupoid over $M$ and $L_M$ be a Dirac structure on $M$.
Suppose $\tau$ and $\sigma$ are a pair of closed 2-forms on $\G$ such that
\begin{enumerate}[(i)]
\item $\t^* L_M = \s^* L_M + (\tau - \sigma)$ and
\item $\tau-\sigma$ is multiplicative.
\end{enumerate}
Then there is a unique D-Lie groupoid whose underlying Lie groupoid is $\G$ and has source and target morphisms $(\s,\sigma)$ and $(\t,\tau)$.
\end{lemma}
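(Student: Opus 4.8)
The plan is to use the criterion recorded just above the statement: a system of candidate structure morphisms defines a D-Lie groupoid exactly when the underlying data is a Lie groupoid and the gauge equation attached to each groupoid axiom holds. Since the underlying Lie groupoid $\G\rightrightarrows M$ is given, the whole problem reduces to equipping $\G$ with a Dirac structure, producing gauge parts for $\u,\m,\i$, and checking the gauge equations. Write $\theta:=\tau-\sigma$; hypothesis (ii) says $\theta$ is multiplicative, and I would first record the two standard consequences $\u^*\theta=0$ and $\i^*\theta=-\theta$.

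First I would fix the Dirac structure on $\G$. Requiring $(\s,\sigma)$ to be a morphism forces $L_\G:=\s^*L_M-\sigma$, which is a genuine Dirac structure because $\s$ is a submersion (so $\s^*L_M$ is defined) and gauge transform by the closed form $-\sigma$ preserves Dirac structures. Hypothesis (i) is precisely the identity $\s^*L_M-\sigma=\t^*L_M-\tau$, so with this $L_\G$ the pair $(\t,\tau)$ is also a morphism. None of the remaining gauge parts are free: each is pinned down by the gauge equation of the groupoid axiom relating it to source and target. From $\s\circ\u=\Id_M$ one reads off $\upsilon=-\u^*\sigma$, the companion axiom $\t\circ\u=\Id$ being consistent precisely because $\u^*\theta=0$; the relation $\s\circ\m=\s\circ\pr_2$ forces $\mu=\pr_1^*\sigma+\pr_2^*\sigma-\m^*\sigma$ on $\G^{(2)}$ (with its fiber-product Dirac structure and canonical projection gauge parts $\pr_2^*\tau$ and $\pr_1^*\sigma$), compatibility with $\t\circ\m=\t\circ\pr_1$ being equivalent to multiplicativity of $\theta$; and $\s\circ\i=\t$ forces $\xi=\tau-\i^*\sigma$, with $\t\circ\i=\s$ consistent iff $\i^*\theta=-\theta$. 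A short computation then confirms that each of $(\u,\upsilon),(\m,\mu),(\i,\xi)$ is a morphism in $\DMan$, i.e.\ $\u^*L_\G=L_M+\upsilon$ and so on; for $\m$ this uses (i) together with $\s\circ\pr_1=\t\circ\pr_2$ on $\G^{(2)}$. Since every piece of data has now been forced, uniqueness is immediate, and it remains only to prove existence by verifying the outstanding gauge equations.

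The heart of the argument, and the main obstacle, is checking the gauge equations for associativity, the unit axioms, and the inverse axioms. The convenient device is that the gauge part of any morphism $\Phi=(\phi,\phi^\flat)\colon Z\to\G$ is recovered from its source-composite by $\phi^\flat=(\text{gauge part of }(\s,\sigma)\circ\Phi)-\phi^*\sigma$; concretely this yields the clean rule that the gauge part of a product equals the sum of the $\sigma$-pullbacks of the factors minus the $\sigma$-pullback of the product. Carrying this out on $\G^{(3)}$, both bracketings of the triple product acquire the same gauge part $\pr_1^*\sigma+\pr_2^*\sigma+\pr_3^*\sigma-\m_3^*\sigma$, where $\m_3$ is the triple multiplication, so associativity holds; the only nontrivial input is that $\m\times\Id$ and $\Id\times\m$ are \emph{well-defined} morphisms into $\G^{(2)}$, which is once more the multiplicativity of $\theta$. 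The unit and inverse equations are checked identically, the well-definedness of the sections $g\mapsto(\u\t(g),g)$ and $g\mapsto(g,\i(g))$ into $\G^{(2)}$ being governed respectively by $\u^*\theta=0$ and $\i^*\theta=-\theta$.

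In sum, the whole verification is routine bookkeeping with pullbacks of $\sigma$ and $\tau$, and the hypotheses (i) and (ii) enter in exactly the places where consistency could fail: (i) makes $L_\G$ well defined and both $\s,\t$ into morphisms, while multiplicativity of $\theta$ (and its corollaries) is what makes the maps out of $\G^{(2)}$ and $\G^{(3)}$ into the fiber products well defined and makes the accumulated gauge parts match. The one point requiring care is tracking the gauge parts of the projections on the iterated fiber product $\G^{(3)}$, which is where the multiplicativity hypothesis does its real work.
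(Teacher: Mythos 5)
Your proposal is correct and follows essentially the same route as the paper's own proof: fix $L_\G=\s^*L_M-\sigma$, read off $\upsilon,\mu,\iota$ from the gauge equations of the axioms involving $\s$ and $\t$ (which also yields uniqueness), and then verify the remaining unit, inverse, and associativity gauge equations by substituting $\mu=\pr_1^*\sigma+\pr_2^*\sigma-\m^*\sigma$, with multiplicativity of $\tau-\sigma$ entering exactly where you say it does. Your ``clean rule'' for the gauge part of a product is just a repackaging of the substitution the paper performs axiom by axiom, so the two arguments coincide in substance.
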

A detailed proof of this lemma can be found in Appendix~\ref{appendix1}.
Still, let us outline the main idea.
\begin{proof}[Outline]
The key observation is that each groupoid axiom can be interpreted as a commutative diagram.
Therefore, each groupoid axiom has an associated gauge equation, which is an equation of 2-forms involving the gauge parts of each structure maps.
Examining these equations reveals that the gauge part of each structure map can be written entirely in terms of $\tau$ and $\sigma$.
For instance, gauge part of $\s \circ \m = \s \circ \pr_2$ yields the equation
\begin{equation}\label{eqn:mudef}
\mu = \pr_1^* \sigma - \m^* \sigma + \pr_2^* \sigma
\end{equation}
(here $\mu$ is the gauge part of $\m$).
Observe that given such a $\sigma$ and $\tau$ that $L_\G := \t^* L_M - \tau$ is a well defined Dirac structure on $\G$.
Furthermore, condition (i) is equivalent to saying that $(\t,\tau)$ and $(\s,\sigma)$ are morphisms of Dirac manifolds.
To construct the D-Lie groupoid, we must produce the 2-forms corresponding to the remaining structure maps.
What we do then is take equations such as (\ref{eqn:mudef}) to be definitions.
We are left to check that that the assumption that $\tau-\sigma$ is multiplicative suffices to ensure that this produces a well defined D-Lie groupoid.
This amounts to showing that multiplicativity of $\tau-\sigma$ implies the gauge part of every groupoid axiom.
This shows that from such a $\sigma$ and $\tau$ we can define a D-Lie groupoid.
Since the groupoid axioms imply that the gauge part of each structure map depends on $\sigma$ and $\tau$, uniqueness is immediate.
\qed\end{proof}

From now on we will call the pair $(\tau,\sigma)$ the \emph{gauge pair} of $\G$.
The 2-form $\Omega := \tau - \sigma$ will be called the \emph{characteristic form} of $\G$.
One way to think of condition (i) is that $\Omega$ measures the degree to which $\t^* L_M \neq \s^* L_M$.
Condition (ii) says that this failure must be up to a multiplicative gauge transformation.

If a D-Lie groupoid $\G$ has a gauge pair of the form $(\tau,0)$ then $\Omega = \tau$ and we call $\G$ \emph{target aligned}.
It turns out that, up to isomorphism, it suffices to consider target aligned D-Lie groupoids.
\begin{lemma}\label{lemma:targetalign}
Let $\G \rightrightarrows M$ be a D-Lie groupoid over $M$ with gauge pair $(\tau,\sigma)$.
Then the pair $(\tau - \sigma,0)$ determines a target aligned D-Lie groupoid and the gauge transformation:
\[ (\Id_\G, \sigma): ( \G, L_\G ) \to (\G , L_\G + \sigma), \, \]
is an isomorphism.
\end{lemma}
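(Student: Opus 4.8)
The plan is to first produce the target aligned D-Lie groupoid from Lemma~\ref{lemma:dliedata}, then identify its Dirac structure, and finally check that $(\Id_\G, \sigma)$, paired with the identity on the base, is an isomorphism of D-Lie groupoids.

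For the first step I would verify that $(\tau - \sigma, 0)$ meets the two hypotheses of Lemma~\ref{lemma:dliedata}. Writing $\tau' = \tau - \sigma = \Omega$ and $\sigma' = 0$, condition (i) reads $\t^* L_M = \s^* L_M + \Omega$, which is precisely condition (i) for the original gauge pair $(\tau,\sigma)$, while condition (ii) asks that $\tau' - \sigma' = \Omega$ be multiplicative, which is condition (ii) of the hypothesis. Hence $(\tau - \sigma, 0)$ determines a unique (target aligned) D-Lie groupoid $\G'$, with the same underlying Lie groupoid over the same base $(M, L_M)$. Using $L_\G = \t^* L_M - \tau$ from the outline of Lemma~\ref{lemma:dliedata}, its Dirac structure is $L_{\G'} = \t^* L_M - \Omega = \t^* L_M - \tau + \sigma = L_\G + \sigma$, so $\G'$ is exactly the codomain appearing in the statement.

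Next I would observe that $(\Id_\G,\sigma): (\G, L_\G) \to (\G, L_\G + \sigma)$ is a morphism of $\DMan$, being a gauge transformation, and that it is invertible there with inverse the gauge transformation $(\Id_\G, -\sigma)$; the composition rule gives $(\Id_\G, -\sigma)\circ(\Id_\G, \sigma) = (\Id_\G, 0)$, and symmetrically on the other side. To upgrade this to an isomorphism of D-Lie groupoids I must check that the pair $\big((\Id_\G, \sigma), (\Id_M, 0)\big)$ is compatible with the source, target and multiplication morphisms of $\G$ and $\G'$. Source and target are short: the composition rule yields $(\s, 0)\circ(\Id_\G, \sigma) = (\s, \sigma) = (\Id_M, 0)\circ(\s, \sigma)$ and $(\t, \Omega)\circ(\Id_\G, \sigma) = (\t, \Omega + \sigma) = (\t, \tau) = (\Id_M, 0)\circ(\t, \tau)$.

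The one step demanding care --- and the main obstacle --- is multiplicativity, because it involves the induced map on composable arrows. I would first determine the gauge part of the identity map regarded as a morphism $\G^{(2)} \to (\G')^{(2)}$ of fiber products: feeding the source/target morphisms of $\G$ and of $\G'$ into the fiber product formula~(\ref{eqn:fiberproduct}) and comparing the two Dirac structures shows this gauge part is $\pr_1^* \sigma + \pr_2^* \sigma$. Then, using that the multiplication of $\G$ has gauge part $\mu = \pr_1^* \sigma - \m^* \sigma + \pr_2^* \sigma$ by~(\ref{eqn:mudef}), while the target aligned $\G'$ has $\mu' = 0$, both composites $(\Id_\G, \sigma)\circ(\m, \mu)$ and $(\m, 0)\circ(\Id_{\G^{(2)}}, \pr_1^*\sigma + \pr_2^*\sigma)$ collapse, via the composition rule, to $(\m, \pr_1^*\sigma + \pr_2^*\sigma)$, so the two agree. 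This makes $\big((\Id_\G, \sigma), (\Id_M, 0)\big)$ a homomorphism of D-Lie groupoids; since it is invertible in $\DMan$ (with inverse furnished by $-\sigma$, itself a homomorphism by the same computation), it is an isomorphism of D-Lie groupoids, as claimed.
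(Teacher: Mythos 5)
Your proposal is correct and follows essentially the same route as the paper's proof: invoke Lemma~\ref{lemma:dliedata} to get the target aligned groupoid with gauge pair $(\tau-\sigma,0)$, verify compatibility with source and target directly from the composition rule in $\DMan$, and reduce compatibility with multiplication to the gauge equation $\m^*\sigma + \mu = \pr_1^*\sigma + \pr_2^*\sigma$, which holds by~(\ref{eqn:mudef}). The only difference is that you spell out two details the paper leaves implicit (the derivation via~(\ref{eqn:fiberproduct}) of the gauge part $\pr_1^*\sigma + \pr_2^*\sigma$ of the induced map on composable pairs, and the explicit inverse $(\Id_\G,-\sigma)$), which is fine.
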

\begin{proof}
For ease of notation, let $\G'$ denote the Dirac manifold $(\G, L_\G + \sigma)$.
By Lemma~\ref{lemma:dliedata}, $\G'$ is a D-Lie groupoid with gauge pair $(\tau-\sigma,0)$.
The morphism $(\Id_\G,\sigma)$ is clearly an isomorphism of Dirac manifolds.
It only remains to check that $(\Id_\G, \sigma)$ is a homomorphism of D-Lie groupoids.
We first verify that $(\Id_\G, \sigma)$ is compatible with the source and target maps, i.e.
\begin{equation} (\s, 0) \circ (\Id, \sigma) = (\Id, 0 ) \circ (\s, \sigma) \quad \mbox{and} \quad (\t, \Omega) \circ (\Id, \sigma) = (\Id, 0 ) \circ (\t, \tau) \, .
\end{equation}
The above equalities are clear from the definition of composition in $\DMan$.
To see that $(\Id_\G, \sigma)$ is compatible with the multiplication, we must check that:
\[
\begin{tikzcd}[column sep=6.5em]
\G \times_M \G \arrow[d, "{(\m, \mu)}"] \arrow[r, "{( \Id, \pr_1^* \sigma + \pr_2^* \sigma )}"] & \G' \times_M \G' \arrow[d, "{(\m, 0)}"] \\
\G \arrow[r, "{(\Id, \sigma )}"] & \G' \,
\end{tikzcd}
\]
commutes.
The gauge equation associated to this diagram is $m^* \sigma + \mu = \pr_1^* \sigma + \pr_2^* \sigma$ which holds by (\ref{eqn:mudef}).
\qed\end{proof}
The next two examples give important classes of D-Lie groupoids which illustrate the scope of this notion.
\begin{example}[Symplectic Groupoids]
Given $(\G, \Omega)$, a symplectic groupoid integrating a Poisson manifold $(M,L_\pi)$, then $t^* L_\pi = s^*L_\pi + \Omega$.
Therefore, a symplectic groupoid is the same as a target aligned D-Lie groupoid with non-degenerate characteristic form.
\end{example}
\begin{example}[Symplectic orbifolds]
Let $\G \rightrightarrows M$ be an {\'e}tale Lie groupoid and suppose $\omega$ is a symplectic form on $M$.
Suppose further that $\t^* \omega - \s^* \omega = 0$.
When $\G$ is proper, it can be thought of as the presentation of a (possibly non-effective) symplectic orbifold.
By thinking of $M$ as a Dirac manifold, then $\G$ can also be thought of as a D-Lie groupoid with characteristic form $0$.
\end{example}
The two preceding examples can be thought of as extreme cases of D-Lie groupoids.
In the first case, the characteristic 2-form on the space of arrows is non-degenerate.
In the second case, the characteristic 2-form vanishes.
In general, a typical D-Lie groupoid is something in between a symplectic groupoid and a groupoid equipped with a multiplicative Dirac structure on the space of objects.
\subsection{D-Lie groupoid morphisms}\label{subsection:groupoidmorphisms}
We will now take a closer look at homomorphisms of D-Lie groupoids.
Throughout, $\G$ and $\H$ are D-Lie groupoids over $M$ and $N$ respectively.
Also, $\Omega^\G$ and $\Omega^\H$ will denote their respective characteristic forms.

Lemma~\ref{lemma:dliedata} also has a version for morphism, so we can also interpret morphisms of D-Lie groupoids in terms of the characteristic forms:
\begin{lemma}
Suppose $F: \G \to \H$ is a homomorphism of the underlying {\bf Lie} groupoids covering the smooth map $f: M \to N$.
Let $\beta$ be a closed 2-form such that $(f,\beta)$ is a morphism of Dirac manifolds and suppose that
\begin{equation}\label{eqn:morphism}
F^* \Omega^\H = t^* \beta - s^* \beta + \Omega^\G \, .
\end{equation}
Then there is a unique 2-form $\alpha$ making $(F,\alpha)$ into a D-Lie groupoid morphism covering $(f,\beta)$.
Furthermore, every D-Lie groupoid morphism arises in this way.
\end{lemma}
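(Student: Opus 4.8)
The plan is to unwind what it means for a pair $(F,\alpha)$ to be a homomorphism of D-Lie groupoids into three gauge equations---one each for compatibility with the source, the target, and the multiplication---and then to read off $\alpha$ from the source equation while verifying that (\ref{eqn:morphism}) is exactly the condition making the other two hold. By Lemma~\ref{lemma:targetalign} I would first replace $\G$ and $\H$ by their target aligned models. This leaves the underlying Lie groupoid map $F$, the base morphism $(f,\beta)$, and the characteristic forms $\Omega^\G,\Omega^\H$ unchanged, hence also the hypothesis (\ref{eqn:morphism}); so there is no loss of generality in assuming $\sigma^\G=\sigma^\H=0$. In this situation $\Omega^\G=\tau^\G$ and $\Omega^\H=\tau^\H$, and formula (\ref{eqn:mudef}) shows that both multiplication morphisms $(\m,\mu^\G)$ and $(\m,\mu^\H)$ have vanishing gauge part, which is what will make the final computation manageable.

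First I would handle the source and target. Expanding the source compatibility $(\s,0)\circ(F,\alpha)=(f,\beta)\circ(\s,0)$ with the composition rule of $\DMan$, the underlying maps agree because $F$ covers $f$, and the gauge equation collapses to $\alpha=\s^*\beta$. This already pins down $\alpha$ and yields uniqueness. Repeating the computation for the target, $(\t,\tau^\H)\circ(F,\alpha)=(f,\beta)\circ(\t,\tau^\G)$ gives the gauge equation $F^*\tau^\H+\alpha=\t^*\beta+\tau^\G$; substituting $\alpha=\s^*\beta$ and using $\tau=\Omega$ in the target aligned case, this is precisely (\ref{eqn:morphism}). Thus the choice $\alpha:=\s^*\beta$ makes both the source and target squares commute, and no other choice does.

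The bulk of the work is multiplication compatibility, which I expect to be the main obstacle. One must first produce the induced morphism $F^{(2)}\colon\G^{(2)}\to\H^{(2)}$ on composable pairs \emph{in} $\DMan$ and then compare the two ways around the multiplication square. The gauge part $\gamma$ of $F^{(2)}$ is determined by the universal property of the fiber product, and its very existence is governed by the outermost gauge equation (\ref{eqn:universalprop}); checking that equation is the delicate point, since this is where composability $\s\circ\pr_1=\t\circ\pr_2$ conspires with (\ref{eqn:morphism}) to make the two candidate formulas for $\gamma$ coincide. With the fiber product conventions of (\ref{eqn:fiberproduct})---and using that $\sigma^\G=0$ forces the gauge part of $\pr_2$ to vanish---this computation yields $\gamma=\pr_2^*\alpha$. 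Since both multiplication morphisms have zero gauge part, commutativity of the square reduces to the single identity $\gamma=\m^*\alpha$, that is $\pr_2^*\alpha=\m^*\alpha$; and because $\alpha=\s^*\beta$ is pulled back from the base along the source, this is immediate from the groupoid relation $\s\circ\pr_2=\s\circ\m$. This establishes existence of $\alpha$.

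Finally, the assertion that every D-Lie groupoid morphism arises in this way follows from the same analysis run in reverse: given any D-Lie morphism $(F,\alpha)$ covering $(f,\beta)$, applying $\Pr_1$ shows $F$ covers $f$ as Lie groupoids, the base $(f,\beta)$ is by definition a morphism of Dirac manifolds, and source and target compatibility force both the formula for $\alpha$ and the relation (\ref{eqn:morphism}). Hence $(F,\alpha)$ is of the asserted form, and throughout the only genuine difficulty is the bookkeeping of gauge parts in the multiplication square, which the target aligned reduction is precisely designed to tame.
\qed
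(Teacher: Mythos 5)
Your proof is correct, and its skeleton is the same as the paper's: source compatibility pins down $\alpha$ uniquely, target compatibility is then precisely (\ref{eqn:morphism}), and the converse follows by subtracting the two gauge equations. The execution differs in two ways, both to your credit. First, the paper works directly with arbitrary gauge pairs, defining $\alpha$ by $F^*\sigma^\H + \alpha = \s^*\beta + \sigma^\G$, whereas you first normalize $\G$ and $\H$ to their target aligned models via Lemma~\ref{lemma:targetalign}; this reduction is legitimate (the gauge isomorphisms cover the identities on the bases and preserve $F$, $(f,\beta)$ and the characteristic forms, so morphisms and the hypothesis transport bijectively), and it collapses $\alpha$ to $\s^*\beta$ and, by (\ref{eqn:mudef}), the multiplication gauge parts to zero. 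Second, the paper explicitly leaves the compatibility with multiplication ``to the reader,'' while you carry it out: the universal-property computation giving the gauge part $\gamma = \pr_2^*\alpha$ of the induced map on composable pairs (where (\ref{eqn:universalprop}) holds because $\s\circ\pr_1 = \t\circ\pr_2$ combines with (\ref{eqn:morphism})), followed by $\pr_2^*\s^*\beta = \m^*\s^*\beta$ from $\s\circ\pr_2 = \s\circ\m$, is exactly the omitted verification, and the target aligned normalization is what makes it this clean. A small bonus: your target equation $F^*\tau^\H + \alpha = \t^*\beta + \tau^\G$ is the correct one; the paper's (\ref{eqn:tcompatible}) misprints $\t^*\beta$ as $\s^*\beta$ (as literally written, subtraction would yield $F^*\Omega^\H = \Omega^\G$ rather than (\ref{eqn:morphism})), so your explicit computation also serves to correct the record.
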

\begin{proof}
To prove the first part, we just need to supply a suitable $\alpha$.
Let $(\tau^\G, \sigma^\G)$ and $(\tau^\H, \sigma^\H)$ be the gauge pairs of $\G$ and $\H$.
Now define $\alpha$ to be the unique 2-form so that
\begin{equation}\label{eqn:scompatible}
F^* \sigma^\H + \alpha = \s^* \beta + \sigma^\G .
\end{equation}
If $\alpha$ does the job, it is certainly the unique one since (\ref{eqn:scompatible}) is the gauge part of compatibility with the source.
The gauge part of compatibility with the target is the analogous equation:
\begin{equation}\label{eqn:tcompatible}
F^* \tau^\H + \alpha = \s^* \beta + \tau^\G.
\end{equation}
Which follows from combining (\ref{eqn:morphism}) and (\ref{eqn:scompatible}).
We leave it to the reader to verify that the gauge part of compatibility with multiplication follows from (\ref{eqn:scompatible}) and (\ref{eqn:tcompatible}).
Hence $(F,\alpha)$ is a morphism of D-Lie groupoids.

Certainly all morphism of D-Lie groupoids will be of this form since (\ref{eqn:morphism}) can be obtained by subtracting (\ref{eqn:scompatible}) from (\ref{eqn:tcompatible}).
\qed~\end{proof}
\begin{example}[Symplectic Groupoids]
Suppose $\G \rightrightarrows M$ and $\H \rightrightarrows N$ are symplectic groupoids.
If we think of $\G$ and $\H$ as target aligned D-Lie groupoids then a morphism consists of a homomorphism of Lie groupoids $F: \G \to \H$ together with a closed 2-form $\beta \in \Omega^2(M)$ such that (\ref{eqn:morphism}) holds.
\end{example}
\subsection{D-Lie algebroids}
There is an infinitesimal version of D-Lie groupoids.
Recall from~\cite{BImf} that given a closed multiplicative form on a Lie groupoid, there is a corresponding \emph{infinitesimal multiplicative form} on the corresponding algebroid.
In the case of closed 2-forms an infinitesimal multiplicative form is a bundle map
\[ \rho_A^*: A \to T^*M  \]
which satisfies a compatibility condition with the bracket on $A$.
Note that we have used the $^*$ notation to emphasize that the map takes values in the cotangent bundle.
We do not mean that $\rho_A^*$ is the linear dual of the anchor.
Here, $A$ is the Lie algebroid of the Lie groupoid in consideration.

Suppose $\Omega$ is a closed multiplicative form on a Lie groupoid.
From~\cite{BImf}, we can define $\rho_A^*$ in the following way:
\begin{equation}\label{eqn:IMFdef}
\rho_A^*(v) := \eta  \quad \Leftrightarrow \quad  t^* \eta = \Omega^\flat(v) \, .
\end{equation}
Furthermore,~\cite{BImf} tells us that $\rho_A^*(v)$ is compatible with the bracket, i.e.
\begin{equation}\label{eqn:IMFbracket}
\rho_A^*([V,W]) = \L_V(\rho_A^*(W)) - {(\dif \rho_A^*(V))}^\flat(W) \, .
\end{equation}
We now proceed to a simple lemma, which will motivate our definition of D-Lie algebroid.

\begin{lemma}\label{lemma:dliealg}
Suppose $\G \rightrightarrows M$ is a D-Lie groupoid with characteristic form $\Omega$.
Let $A$ be the corresponding algebroid and $\rho_A^*$ be the associated infinitesimal multiplicative form and $\rho_A$ be the anchor map.
Then $\rho_A(v) \oplus \rho_A^*(v) \in L_M$ for all $v \in A$ and $(\rho \oplus \rho_A^*): A \to L_M$ is a Lie algebroid homomorphism.
\end{lemma}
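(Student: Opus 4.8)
The statement has two halves—the pointwise inclusion $\rho_A(v)\oplus\rho_A^*(v)\in L_M$ and the claim that $\rho_A\oplus\rho_A^*$ is a Lie algebroid morphism—and I would prove them in that order, because the inclusion is exactly what forces the bracket $[\,\rho_A(V)\oplus\rho_A^*(V),\ \rho_A(W)\oplus\rho_A^*(W)\,]$ to land back in $L_M$, and hence is what makes the second half even meaningful. Throughout I take the convention $A=\ker(\dif\s)|_M$ with anchor $\rho_A=\dif\t|_A$, which is the one consistent with the appearance of $\t^*$ in~(\ref{eqn:IMFdef}).

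For the inclusion, the plan is to evaluate condition (i) of Lemma~\ref{lemma:dliedata}, namely $\t^* L_M = \s^* L_M + \Omega$, at a unit $\u(x)$ and test it on $v\in A_x=\ker(\dif\s)_{\u(x)}$. Unwinding the gauge transform~(\ref{eqn:gaugetransform}) and the pullback~(\ref{eqn:diracpullback}), a general element of $\s^* L_M + \Omega$ over $\u(x)$ reads $w\oplus(\s^*\zeta+\iota_w\Omega)$ with $\dif\s(w)\oplus\zeta\in L_M$; taking $w=v$ and $\zeta=0$—admissible since $\dif\s(v)=0$ and $0\oplus0\in L_M$—produces the element $v\oplus\iota_v\Omega$. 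By (i) this also lies in $\t^* L_M$, so the definition of the pullback furnishes $\eta\in T^*_x M$ with $\t^*\eta=\iota_v\Omega=\Omega^\flat(v)$ and $\dif\t(v)\oplus\eta\in L_M$; this $\eta$ is unique because $\t$ is a submersion, so $\t^*$ is injective. But $\t^*\eta=\Omega^\flat(v)$ is precisely the defining relation~(\ref{eqn:IMFdef}) of the IM form, whence $\eta=\rho_A^*(v)$, and since $\dif\t(v)=\rho_A(v)$ we conclude $\rho_A(v)\oplus\rho_A^*(v)\in L_M$. (That $\rho_A^*$ is defined at all, i.e.\ that $\iota_v\Omega$ is $\t$-basic at units, is guaranteed by the multiplicativity of $\Omega$ via~\cite{BImf}.)

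With the inclusion in hand, the morphism property separates into anchor- and bracket-compatibility. The anchor of $L_M$ is the projection $w\oplus\eta\mapsto w$, so it carries $\rho_A(v)\oplus\rho_A^*(v)$ to $\rho_A(v)$ and anchor-compatibility is automatic. For brackets I would expand the $\T M$-bracket (in its non-skew, Dorfman form, whose restriction to $L_M$ is $[\cdot,\cdot]_{L_M}$) on two image sections, $[\,\rho_A(V)\oplus\rho_A^*(V),\ \rho_A(W)\oplus\rho_A^*(W)\,] = [\rho_A(V),\rho_A(W)]\oplus\big(\L_{\rho_A(V)}\rho_A^*(W)-\iota_{\rho_A(W)}\dif\rho_A^*(V)\big)$, and compare termwise with $\rho_A([V,W]_A)\oplus\rho_A^*([V,W]_A)$. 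The tangent parts coincide because $\rho_A$ is the anchor of a Lie algebroid, so $[\rho_A(V),\rho_A(W)]=\rho_A([V,W]_A)$; the cotangent parts coincide because the required identity is exactly the IM-form bracket relation~(\ref{eqn:IMFbracket}), once $\L_V$ there is read as $\L_{\rho_A(V)}$ and $(\dif\rho_A^*(V))^\flat(W)$ as $\iota_{\rho_A(W)}\dif\rho_A^*(V)$.

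The hard part is not any single computation but pinning down two conventions: in the first step, correctly unwinding the gauge transform and pullback at a unit and noticing that the choice $\zeta=0$ does the job; and in the last step, checking that the bracket formula~(\ref{eqn:IMFbracket}) imported from~\cite{BImf} is written for the same (Dorfman) $\T M$-bracket whose restriction defines $[\cdot,\cdot]_{L_M}$. Once those sign and convention matters are aligned, the cotangent identity is a verbatim match and nothing further needs to be verified.
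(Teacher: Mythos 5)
Your proposal is correct and follows essentially the same route as the paper's proof: the inclusion $\rho_A(v)\oplus\rho_A^*(v)\in L_M$ is obtained by testing $v\oplus 0\in\s^*L_M$ against condition (i) of Lemma~\ref{lemma:dliedata} and identifying the resulting covector via the defining relation~(\ref{eqn:IMFdef}), and the morphism property reduces to the anchor property of $\rho_A$ for the tangent component and to~(\ref{eqn:IMFbracket}) for the cotangent component. Your added remarks (uniqueness of $\eta$ from injectivity of $\t^*$, and the explicit anchor-compatibility check) are finer points the paper leaves implicit, but they do not change the argument.
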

\begin{proof}
We have two things to show.
First, the claim that, for any $v \in A$, $\rho_A(v) \oplus \rho_A^*(v) \in L_M$.
Let $v \in A := \ker \dif \s|_M$ and suppose $\eta = \rho_A^*(v)$.
By the definition of the pullback, we know that $v \oplus 0 \in \s^* L_M$.
Consequently, $v \oplus \Omega^\flat(v) \in \t^* L_M$ by Lemma~\ref{lemma:dliedata}\emph{(i)}.
Hence, by the definition of $\rho_A^*$, we have that $v \oplus \eta \in \t^* L_M$.
Therefore, we can conclude that $\rho_A(v) \oplus \rho_A^*(v) \in L_M$ (again by the definition of the pullback).

Now for the second part.
Recall the definition
\footnote{Note that some references may give a slightly different definition of the Courant bracket.
In this case, for convenience we are adopting the version used in~\cite{integrationoftwisteddiracbrackets}.
When restricted to any given Dirac structure, they turn out to be equal.
} of the Courant bracket on $TM \oplus T^*M$, which plays the roll of the Lie bracket for $L_M$:
\begin{equation}\label{eqn:courantbracket}
{[V \oplus \eta , W \oplus \zeta ]}_{L_M} := [V,W] \oplus \L_V(\zeta) - {(\dif \eta)}^\flat(W) \, .
\end{equation}
We need to show that
\[ {[\rho_A(V) \oplus \rho_A^*(V), \rho_A(W) \oplus \rho_A^*(W) ]}_{L_M} = \rho_A({[V,W]}_A) \oplus \rho_A^*({[V,W]}_A) \, . \]
This is clearly true for the $TM$ component, since $\rho: A \to TM$ is compatible with the standard Lie bracket.
For the $T^*M$ component, the result follows immediately from (\ref{eqn:IMFbracket}).
\qed\end{proof}
\begin{definition}
A \emph{D-Lie algebroid} is a Lie algebroid $(A, {[ \cdot , \cdot ]}_A, \rho)$ over a Dirac manifold $(M,L)$ together with a Lie algebroid homomorphism $\til{\rho}: A \to L_M$
\end{definition}

We can recover an infinitesimal multiplicative form from this definition by taking $\rho_A^*$ to be the $T^*M$ component of $\til\rho$.
We say that a D-Lie algebroid is \emph{integrable} if there exists a D-Lie groupoid $\G$ whose corresponding infinitesimal multiplicative form is $\rho_A^*$.
When $\G$ is source simply connected and target aligned we say that $\G$ is the canonical integration.
\begin{example}[Poisson Manifolds]
Let $(\G,\Omega)$ be a symplectic groupoid over a Poisson manifold $(M,\pi)$.
Then let $\til\rho: A \to T^*M$ be the standard identification of the algebroid of $\G$ with the cotangent bundle of $M$.
In this way, we can think of $A$ as a D-Lie algebroid.
\end{example}
\begin{example}[Dirac Structures]
Let $L_M$ be any Dirac structure.
Then if we take $\til\rho: L_M \to L_M$ to be the identity morphism, we can think of $L_M$ as a D-Lie algebroid.
\end{example}
\begin{example}[Trivial algebroids]
Let $A$ be a rank zero vector bundle, thought of as a trivial algebroid.
Then for any Dirac structure $L_M$ on the base of $A$, we can take $\til\rho: A \to L_M$ to be the zero map.
\end{example}
The last example illustrates the interesting fact that integrability of $L_M$ is neither a necessary nor a sufficient condition for integrablity of the D-Lie algebroid.
\begin{definition}
Suppose $(A, \rho_A^*)$ and $(B, \rho_B^*)$ are D-Lie algebroids.
A \emph{morphism} of D-Lie algebroids is a Lie algebroid morphism $F: A \to B$ which cover a morphism of Dirac manifolds $(f,\beta): M \to N$ such that
\[ f^* \circ \rho_B^* \circ F = \rho_A^* + \beta^\flat \, . \]
\end{definition}
The left side of the equation above is the \emph{pullback} of the IM 2-form $\rho_B^*$ along $F$.
The right side is the infinitesimal form of the gauge transformation $\Omega + \t^* \beta - \s^* \beta$.
Hence, this is just the infinitesimal version of (\ref{eqn:morphism}).

\section{Principal bundles and Morita equivalence}\label{section:principalgbundles}
In this section, we will define $G$-bundles and $(\G,\H)$-bibundles in the setting of D-Lie groupoids.
This gives rise to a notion of Morita equivalence of D-Lie groupoids which generalizes Morita equivalence for symplectic groupoids.
\subsection{Principal \texorpdfstring{$\G$}{} bundles}
\begin{definition}
Let $\G \rightrightarrows M$ be a D-Lie groupoid.
A \emph{left $\G$-bundle} over $N$ is a $\G$-bundle internal to $\DMan$, i.e., an ordinary $\G$-bundle $\s^P:P\to N$, where $P$ and $N$  are Dirac manifolds and all the structure maps $\s^P:P \to N$, $\t^P:P \to M$ and $\m_L:\G \times_M P \to P$ are $\DMan$-morphisms.
We say that $P$ is principal if the induced morphism $\G \times_M P \to P \times_N P$ is an isomorphism.
\end{definition}

The reader should note that, by our construction of fiber products in $\DMan$, a $\G$-bundle for a D-Lie groupoid is principal if and only if the underlying action of a Lie groupoid is principal.

The morphisms $\s^P, \t^P$ and $\m_L^P$ come with gauge parts $\sigma^P, \tau^P$ and $\mu_L^P$.
The equation associated to $\s^P \circ \m_L(g,p) = \s^\P(p)$ is
\[ \mu_L + \m^* \sigma^P = \pr_1^* \sigma^\G + \pr_2^* \sigma^P \, . \]
Similarly, the gauge equation of $\t^P \circ \m_L(g,p) = \t(g)$ is
\[ \mu_L + \m^* \tau^P = \pr_1^* \tau^\G + \pr_2^* \tau^P \, . \]
Therefore, when defining a principal $\G$-bundle it suffices to specify the 2-forms $\sigma^P$ and $\tau^P$.
As with D-Lie groupoids, we say the characteristic 2-form of $P$ is $\Omega^P := \tau^P - \sigma^P$.
Let $\Omega^\G$ be the characteristic 2-form of $\G$.
Then
\[ \m_L^* \Omega^P = \pr_1^* \Omega^\G + \pr_2^* \Omega^P \, . \]
That is, the closed 2-form $\Omega^P$ is left multiplicative.
When $\sigma^P = 0$ we call $P$ \emph{target aligned}.
By a similar argument as in Lemma~\ref{lemma:targetalign}, every principal $\G$-bundle is canonically isomorphic to a target aligned principal $\G$-bundle.

The next three lemmas are important technical results about principal $\G$-bundles that will be needed later in Section~\ref{section:stacks}.
In brief, the first says that the standard construction of a pullback $\G$-bundle still works in our setting.
The second lemma implies that the pullback construction is unique up to a unique isomorphism.
The last lemma says that principal $\G$-bundles of D-Lie groupoids satisfy a property known as `descent.'
\begin{lemma}\label{lemma:gbundlepullback}
Let $\G$ be a D-Lie groupoid, $f: N_1 \to N_2$ a morphism in $\DMan$ and $P$ a principal $\G$-bundle over $N_2$.
Then there exists a principal $\G$-bundle $Q$ over $N_1$ and a principal bundle morphism $F: Q \to P$ covering $f$.
\end{lemma}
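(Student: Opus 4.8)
The plan is to build $Q$ as the pullback of $P$ along $f$, performed inside $\DMan$, and to let $F$ be the projection onto the $P$-factor.

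First I would form the fibre product $Q := N_1 \times_{N_2} P$ of the two $\DMan$-morphisms $(f,\beta): N_1 \to N_2$ and $(\s^P,\sigma^P): P \to N_2$. Since $P$ is principal, its bundle projection $\s^P$ is a submersion, so this fibre product exists in $\DMan$ by the discussion following (\ref{eqn:fiberproduct}); it carries a Dirac structure and canonical gauge parts on its two projections $\s^Q := \pr_{N_1}: Q \to N_1$ and $F := \pr_P: Q \to P$. Note $\s^Q$ is a submersion, being a base-change of $\s^P$, so $Q$ is genuinely a bundle over $N_1$. On the underlying Lie groupoids I equip $Q$ with the usual pullback $\G$-bundle structure: bundle projection $\s^Q$, moment map $\t^Q := \t^P \circ F$, and action $g \cdot (n,p) := (n, g \cdot p)$, defined on $\G \times_M Q$ (which exists because $\s: \G \to M$ is a submersion). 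The underlying action of Lie groupoids is the ordinary pullback bundle, which is principal; hence, by the remark following the definition of a principal $\G$-bundle, once $Q$ is shown to be a $\G$-bundle in $\DMan$ it is automatically principal.

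Next I would record the gauge data produced by the fibre product. Reading the gauge parts of the projections off (\ref{eqn:fiberproduct}) gives $\sigma^Q = F^* \sigma^P$ for $\s^Q$ and $(\s^Q)^* \beta$ for $F$; composing, the moment map $\t^Q = \t^P \circ F$ acquires gauge part $\tau^Q = F^* \tau^P + (\s^Q)^* \beta$. Both $\s^Q$ and $\t^Q$ are then $\DMan$-morphisms, the first by construction and the second as a composite of $\DMan$-morphisms. Subtracting yields the characteristic form
\[ \Omega^Q = \tau^Q - \sigma^Q = F^* \Omega^P + (\s^Q)^* \beta \, . \]
The crux, and the step I expect to be the main obstacle since it demands careful bookkeeping of pullbacks through the two fibre products, is to check that $\Omega^Q$ is left multiplicative, i.e.
\[ (\m_L^Q)^* \Omega^Q = \pr_\G^* \Omega^\G + \pr_Q^* \Omega^Q \]
on $\G \times_M Q$, where $\pr_\G, \pr_Q$ are the two projections. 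For this I would use that $F$ intertwines the two actions, so $F \circ \m_L^Q = \m_L^P \circ (\Id_\G \times F)$ with $\Id_\G \times F : \G \times_M Q \to \G \times_M P$, together with invariance of the bundle projection $\s^Q \circ \m_L^Q = \s^Q \circ \pr_Q$. Pulling back the displayed formula for $\Omega^Q$, substituting the known left multiplicativity of $\Omega^P$, and cancelling via $\pr_\G \circ (\Id_\G \times F) = \pr_\G$ and $\pr_P \circ (\Id_\G \times F) = F \circ \pr_Q$ delivers the identity. This confirms that $\sigma^Q, \tau^Q$ define a principal $\G$-bundle structure on $Q$, the gauge part of $\m_L^Q$ being the one forced by the gauge equations recorded before Lemma~\ref{lemma:gbundlepullback}.

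Finally I would verify that $F$ is a principal bundle morphism covering $f$. By construction $F$ is a $\DMan$-morphism and is $\G$-equivariant; it satisfies $\t^P \circ F = \t^Q$ by the definition of $\t^Q$, and the commuting square of the fibre product gives $\s^P \circ F = f \circ \s^Q$, so $F$ covers $(f,\beta)$. The attendant gauge equations hold because they are precisely the gauge parts of this commuting square. This completes the construction.
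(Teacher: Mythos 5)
Your proof is correct and takes essentially the same route as the paper: both construct $Q$ as the pullback $N_1 \times_{N_2} P$ with the standard underlying $\G$-bundle structure and endow it with the characteristic form $F^*\Omega^P + (\s^Q)^*\beta$ (the paper writes $\pr_1^*\Omega^P + \pr_2^*\beta$ after reducing, without loss of generality, to target-aligned $\G$ and $P$). The only differences are that you keep general gauge pairs by reading them off the $\DMan$ fibre product rather than invoking target alignment, and you spell out the left-multiplicativity computation that the paper dismisses as easily verified; both are sound.
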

\begin{proof}
Without loss of generality, we can assume $\G$ and $P$ are target aligned.
We already know that the result holds in $\Man$ for principal $\G$-bundles of Lie groupoids.
Let
\[ Q:= f^* P = P \times_{N_2} N_1 \]
and equip it with the standard structure maps so that $Q$ is a principal $\G$-bundle for the underlying Lie groupoid.
Let $F: Q \to P$ be projection to the first coordinate.
To make $Q$ a principal $\G$-bundle in $\DMan$ we must equip it with a characteristic form.
Let
\[ \Omega^Q := \pr_1^* \Omega^P + \pr_2^* \beta \, .\]
It can easily be verified that $\Omega^Q$ is left multiplicative with respect to the left action of $\G$ on $Q$.
Furthermore the map $F:= \pr_1: Q \to P$ is equivariant with respect to the left action.
\qed\end{proof}
\begin{lemma}\label{lemma:gbundlecartesian}
Suppose $(f_1,\beta_1):N_1 \to N_2$ and $(f_2,\beta_2): N_2 \to N_3$ are morphisms in $\DMan$ and $P_1$, $P_2$ and $P_3$ are principal $\G$-bundles over $N_1$, $N_2$ and $N_3$ respectively.
Furthermore, assume we are given principal bundle morphism $F_2: P_2 \to P_3$ and $G: P_1 \to P_3$ covering $f_2$ and $f_2 \circ f_1$ respectively.
Then there exists a unique principal bundle morphism $F_1: P_1 \to P_2$ covering $f_1$ such that $F_2 \circ F_1 = G$.
\end{lemma}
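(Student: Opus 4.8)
The plan is to first solve the problem at the level of the underlying Lie groupoids and smooth maps, and then to pin down the unique gauge part of $F_1$ using the composition law of $\DMan$, checking afterwards that the result is genuinely a morphism of D-Lie $\G$-bundles.

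First I would recall the standard fact that a principal bundle morphism covering a base map is automatically cartesian: the equivariant map $P_2 \to f_2^* P_3 = P_3 \times_{N_3} N_2$, $p \mapsto (F_2(p), \s^{P_2}(p))$, is a diffeomorphism by principality, so $F_2$ exhibits $P_2$ as the pullback of $P_3$ along $f_2$. Consequently there is a unique smooth $\G$-equivariant map $F_1 : P_1 \to P_2$ covering $f_1$ with $F_2 \circ F_1 = G$, given on points by $F_1(p) = (G(p), f_1(\s^{P_1}(p)))$ under the identification $P_2 \cong f_2^* P_3$. The same fact applied to $F_1$ shows $P_1 \cong f_1^* P_2$ with $F_1$ corresponding to the projection $\pr_1 : P_2 \times_{N_2} N_1 \to P_2$; since $\s^{P_2}$ is a submersion, this fiber product exists in $\DMan$ and $\pr_1$ is a $\DMan$-morphism, so in particular the pullback Dirac structure $F_1^* L_{P_2}$ is well defined.

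Next, the gauge part $\phi_1$ of $F_1$ is forced. Writing $\phi_2$ and $\gamma$ for the gauge parts of $F_2$ and $G$, the composition rule $(F_2, \phi_2) \circ (F_1, \phi_1) = (G, \gamma)$ gives $F_1^* \phi_2 + \phi_1 = \gamma$, hence $\phi_1 := \gamma - F_1^* \phi_2$; this proves uniqueness at once. To see that $(F_1, \phi_1)$ is a $\DMan$-morphism I would use that gauge transforms commute with pullback, $h^*(L + \beta) = h^* L + h^* \beta$ (immediate from \eqref{eqn:diracpullback} and \eqref{eqn:gaugetransform}). Since $F_2$ and $G$ are $\DMan$-morphisms we have $F_2^* L_{P_3} = L_{P_2} + \phi_2$ and $G^* L_{P_3} = L_{P_1} + \gamma$; applying $F_1^*$ to the first and using $G = F_2 \circ F_1$ yields $L_{P_1} + \gamma = G^* L_{P_3} = F_1^* L_{P_2} + F_1^* \phi_2$, that is $F_1^* L_{P_2} = L_{P_1} + (\gamma - F_1^* \phi_2) = L_{P_1} + \phi_1$, as required.

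Finally I would verify that $(F_1, \phi_1)$ is a morphism of principal $\G$-bundles covering $(f_1, \beta_1)$, i.e.\ that it is compatible with the target maps $\t^{P_i}$, the source maps $\s^{P_i}$, and the left $\G$-action. Each of these is an equation of gauge parts, and in every case it follows by \emph{subtracting} the corresponding identity for $F_2$ from that for $G$ and substituting $\phi_1 = \gamma - F_1^* \phi_2$, together with $\s^{P_2} \circ F_1 = f_1 \circ \s^{P_1}$ and the composition law; for example target compatibility reduces to $F_1^* \tau^{P_2} + \phi_1 = G^* \tau^{P_3} + \gamma = \tau^{P_1}$, and source compatibility recovers $(\s^{P_1})^* \beta_1 + \sigma^{P_1}$ after cancelling $(\s^{P_1})^* f_1^* \beta_2$. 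The multiplication compatibility is the same computation carried out over $\G \times_M P_1$, using left multiplicativity of the characteristic forms. I expect the only genuinely delicate point to be the well-definedness and $\DMan$-morphism property of $F_1$ in the first paragraph; once $F_1^* L_{P_2}$ is known to make sense, all remaining verifications are formal manipulations of gauge equations driven by uniqueness.
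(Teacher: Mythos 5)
Your proposal is correct and follows essentially the same strategy as the paper: reduce existence and uniqueness of the underlying smooth map to the standard fact for principal bundles of Lie groupoids, then observe that the gauge part of $F_1$ is forced by a single gauge equation. The only cosmetic difference is that you determine the gauge part from the composition law $F_2 \circ F_1 = G$ in $\DMan$, whereas the paper determines it from compatibility with the source maps (the two determinations necessarily agree), and you additionally spell out the verifications that the paper leaves to the reader.
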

\begin{proof}
At the level of manifolds, this is a well known property of principal $\G$-bundles for $\G$ a Lie group.
Therefore, to define $F_1$ it suffices to provide the gauge part of $F_1$.
Therefore, let the gauge part of $F_1$ be
\[ \widetilde{\beta} := \sigma^{P_1} - {(\s^{P_1})}^* \beta_1 - {(F_1)}^* \sigma^{P_2} \, . \]
We leave it to the reader to check that $(F_1, \widetilde{\beta})$ is a well defined morphism of principal bundles.
Finally, compatibility with the source maps implies that this choice of gauge part is the only one possible and therefore $(F_1, \widetilde{\beta})$ is unique.
\qed\end{proof}
We will typically denote the principal bundle $Q$ from Lemma~\ref{lemma:gbundlepullback} with the notation $f^* P$ and call it the \emph{pullback bundle} along $f$.
When $f:U \to N$ is an inclusion, we may also denote $f^* P$ by $P|_U$.
\begin{lemma}\label{lemma:gbundleglueing}
\begin{enumerate}[(a)]
\item Suppose $\{ i_a: U_a \to N \}$ is a covering of $N$ in $\DMan$ and $\G$ is a D-Lie groupoid.
Let $P_a \to U_a$ be a collection of principal $\G$-bundles together with morphisms $\phi_{ab}: P_b|_{U_{ab}} \to P_a|_{U_{ab}}$ such that $\phi_{ab} \circ \phi_{bc} = \phi_{ac}$ when restricted to any triple intersection $U_{abc}$.
Then there exists a principal $\G$-bundle $P \to N$ together with morphisms ${\{ \phi_a: P|_{U_a} \to P_a \}}_{a \in A}$ such that $\phi_{ab} \circ \phi_{b} = \phi_a$.
\item Let $P \to N$ and $Q \to N$ be principal $\G$-bundles over $N$ and suppose $\{ i_a : U_a \to N \}$ is a covering of $N$ in $\DMan$.
Suppose we have a collection of morphisms $F_a: P|_{U_a} \to Q|_{U_a}$ covering the identity such that
\[ F_a|_{P|_{U_{ab}}} = F_b|_{Q|_{U_{ab}}} \, . \]
Then there exists a unique morphism $F: P \to Q$ such that $F|_{P|_{U_a}} = F_a$.
\end{enumerate}
\end{lemma}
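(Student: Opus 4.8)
The plan is to bootstrap both statements from the corresponding facts for principal bundles of ordinary Lie groupoids, which hold after applying $\Pr_1$, and then to glue the remaining Dirac-geometric data (the characteristic $2$-forms and the gauge parts of the morphisms). Throughout I would first reduce to the target aligned case, using the canonical isomorphism to a target aligned bundle recalled just before the statement (the same device as in Lemma~\ref{lemma:targetalign}). In this situation each $P_a$ carries only a characteristic form $\Omega^{P_a}=\tau^{P_a}$, its Dirac structure is $L_{P_a}=(\s^{P_a})^* L_{U_a}$, and a principal bundle morphism covering the identity has vanishing gauge part and preserves characteristic forms: for $\phi_{ab}$ covering $\Id_{U_{ab}}$, source compatibility forces its gauge part to vanish and target compatibility then yields $\phi_{ab}^*\Omega^{P_a}=\Omega^{P_b}$ over $U_{ab}$.

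For part (a), I would apply $\Pr_1$ to the data $(P_a,\phi_{ab})$ to obtain a descent datum of principal $\G$-bundles for the underlying Lie groupoid over the underlying cover; classical descent then produces a principal $\G$-bundle $P\to N$ in $\Man$ together with isomorphisms $\phi_a:P|_{U_a}\to P_a$ covering the identity and satisfying $\phi_{ab}\circ\phi_b=\phi_a$ over $U_{ab}$. I would upgrade $P$ to an object of $\DMan$ by setting $L_P:=(\s^P)^* L_N$, which exists because $\s^P$ is a submersion and which restricts correctly to $L_{P|_{U_a}}=\phi_a^* L_{P_a}$, so that each $\phi_a$ is automatically a Dirac isomorphism with gauge part $0$. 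The one piece of genuine content is the characteristic form, which I would define locally by $\Omega^P|_{P|_{U_a}}:=\phi_a^*\Omega^{P_a}$. These local forms agree on overlaps, since over $U_{ab}$ one has $\phi_a^*\Omega^{P_a}=\phi_b^*\phi_{ab}^*\Omega^{P_a}=\phi_b^*\Omega^{P_b}$ by the cocycle identity and the compatibility noted above; hence they glue to a global closed $2$-form $\Omega^P$ on $P$. Finally I would check that $\Omega^P$ is left multiplicative and that each $\phi_a$ is a morphism of principal $\G$-bundles in $\DMan$; both are local conditions on the cover $\{P|_{U_a}\}$, which is saturated for the left $\G$-action since $\s^P$ is $\G$-invariant, so they follow from the corresponding properties of each $\Omega^{P_a}$ together with the equivariance of $\phi_a$.

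For part (b), the underlying smooth maps of the $F_a$ agree on overlaps and the $P|_{U_a}$ cover $P$, so they glue to a unique $\G$-equivariant smooth map $F:P\to Q$ of the underlying bundles. It remains to produce its gauge part. By Lemma~\ref{lemma:gbundlecartesian}, a principal bundle morphism covering a fixed base morphism (here $\Id_N$) has its gauge part uniquely determined by source compatibility; thus each $F_a$ already carries the forced gauge part, and since the $F_a$ agree as $\DMan$-morphisms on the overlaps, their gauge parts agree there and glue to a gauge part for $F$. The resulting morphism restricts to each $F_a$, and it is unique because it is unique on each member of the cover.

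The main obstacle, and really the only non-formal step, is the gluing of the characteristic form in part (a): one must verify that the cocycle condition on the transition morphisms $\phi_{ab}$ translates precisely into the overlap compatibility $\phi_a^*\Omega^{P_a}=\phi_b^*\Omega^{P_b}$, and that left multiplicativity and closedness are inherited by the glued form. Once the target aligned reduction is in place these become local checks, so the difficulty is organizational rather than substantive; the remaining parts are immediate consequences of the already-established descent for ordinary principal bundles and of the uniqueness of gauge parts furnished by Lemma~\ref{lemma:gbundlecartesian}.
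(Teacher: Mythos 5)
Your proposal is correct and follows essentially the same route as the paper's proof: reduce to the target aligned case, invoke the known descent results for principal bundles of ordinary Lie groupoids, glue the characteristic forms locally via $\Omega^P|_{P|_{U_a}} := \phi_a^*\Omega^{P_a}$ using the cocycle identity for well-definedness, and for (b) glue the smooth maps and verify $F^*\Omega^Q = \Omega^P$ locally. Your treatment is somewhat more explicit than the paper's (e.g.\ spelling out $L_P = (\s^P)^* L_N$, the vanishing of gauge parts, and the locality of multiplicativity), but there is no substantive difference in strategy.
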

\begin{proof}
We first prove (a).
As with the previous two lemmas, the result already holds in the smooth category.
Let the manifold $P$ and the smooth maps $\phi_a$ be ones satisfying these properties in the smooth setting.
Without loss of generality we can assume each $P_a$ is target aligned.
We can also assume without loss of generality that the gauge part of each $\iota_a: U_a \to N$ is zero.
To make $P$ into a target aligned principal bundle in $\DMan$, we must specify its characteristic form $\Omega$.
So take
\[ \Omega^P |_{P|_{U_a}}:= \phi_a^* \Omega^{P_a} \, . \]
This is well defined since
\[ \phi_a^* \Omega^{P_a}|_{P|_{U_{ab}}} = \phi_b^* \Omega^{P_b}|_{P|_{U_{ab}}} \, . \]
Which follows from the fact that $\phi_{ab}^* \Omega^{P_b} = \Omega^{P_a}$ together with the fact that the smooth maps $\phi_{ab} \circ \phi_b$ and $\phi_a$ are equal.

We now prove part (b).
Again, assume the maps $i_a: U_a \to N$ have trivial gauge part and $P$ and $Q$ are target aligned.
As with part (a) the result is known to hold in the smooth category.
The smooth map $F: P \to Q$ covers the identity, so we only need to show that $F^* \Omega^Q = \Omega^P$.
However, we know that $F_a^* \Omega^Q = \Omega^P$ when restricted to each $P|_{U_a}$ so the claim follows immediately.
\qed\end{proof}
\subsection{Bibundles}
We can now proceed to tackle bibundles and Morita equivalence in $\DMan$.
Throughout this section $\G$ and $\H$ are D-Lie groupoids over the Dirac manifolds $M$ and $N$ respectively.
\begin{definition}
Suppose $\G$ and $\H$ are D-Lie groupoids.
A \emph{$(\G, \H)$-bibundle} is defined to be a bibundle object internal to the category $\DMan$.
Hence, it is an object $P$ in $\DMan$ together with morphisms $\t^P, \s^P, \m_L,\m_R$ (again in $\DMan$) which satisfy the axioms of commuting left and right actions over $N$ and $M$, respectively.

A bibundle $P$ is said to be \emph{left principal bibundle} if the left action makes $P$ into a left principal $\G$-bundle over $N$.
We define \emph{right principal} similarly.
We call $P$ a \emph{principal bibundle} if $P$ is both left and right principal.
A principal $(\G,\H)$-bibundle is also called a \emph{Morita equivalence} of $\G$ and $\H$.
\end{definition}

Just like $\G$-bundles, a $(\G, \H)$-bibundle $P$ of D-Lie groupoids is determined by the data of the underlying bundle and the gauge part of the source and target maps $\sigma^P$ and $\tau^P$.
The \emph{characteristic form} $\Omega^P$ of $P$ is defined to be $\sigma^P - \tau^P$ as before.
Using the same techniques as before we can show that $\sigma^P$ and $\tau^P$ define a bibundle if and only if $\Omega^P$ is left and right multiplicative.
That is
\[ \m_L^* \Omega^P = \pr_1^* \Omega + \pr_2^* \Omega^P \quad \mbox{ and } \quad \m_R^* \Omega^P = \pr_1^* \Omega^P + \pr_2^* \Omega^\H.  \]
We say that $P$ is \emph{target aligned} if $\sigma^P = 0$.

An equivariant map of $(\G, \H)$-bibundles is a morphism $\Phi: P \to Q$ which commutes with the source and target maps and respects the multiplication.
In terms of the characteristic 2-form the condition on $F: Q \to P$ is just
\[ F^* \Omega^P = \Omega^Q \, . \]
This makes sense when compared to the case of left $\G$-bundles since we can think of any $(\G,\H)$-bibundle morphism as a left $\G$-bundle morphism covering the identity on $N$.
As with D-Lie groupoids, for any bibundle $P$ the gauge transformation $(\Id,\sigma^P): (P,L_P) \to (P,L_P + \sigma^P)$ is an isomorphism of $P$ with a target aligned bibundle.

The next few examples demonstrate how this notion of Morita equivalence of D-Lie groupoid relates to existing definitions of Morita equivalence.
\begin{example}[Morita equivalence of Lie groupoids]
Given a Morita equivalence $P$ of Lie groupoids $\G$ and $\H$, then thinking of $\G$ and $\H$ as D-Lie groupoids with the tangent Dirac structure allows us to view $(P, TP)$ as a Morita equivalence of D-Lie groupoids $(\G, T\G)$ and $(\H, T\H)$.
Furthermore, it is a simple exercise to check that any Morita equivalence of the D-Lie groupoids $(\G, T\G)$ and $(\H, T \H)$ is isomorphic to such a $(P, T P)$.
\end{example}
\begin{example}[Symplectic Morita equivalence]\label{ex:sympl:morita}
Given a symplectic Morita equivalence $(P, \Omega^P)$ of symplectic groupoids $(\G, \Omega^\G)$ and $(\H, \Omega^\H)$, we can think of $(P, \Omega^P)$ as a target aligned Morita equivalence of $\G$ and $\H$ viewed as D-Lie groupoids.
\end{example}

We can improve on the observation from the preceding example.
\begin{proposition}\label{prop:symform}
Suppose $\G$ and $\H$ are symplectic groupoids, i.e.\ target aligned D-Lie groupoids with symplectic characteristic forms.
There is a one-to-one correspondence between symplectic Morita equivalences and target aligned principal $(\G,\H)$-bibundles.
\end{proposition}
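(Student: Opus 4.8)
The plan is to realize the correspondence as the identity on underlying data and to reduce the whole statement to a single non-degeneracy claim. In one direction, Example~\ref{ex:sympl:morita} already shows that a symplectic Morita equivalence $(P,\Omega^P)$, together with its symplectic form, is a target aligned principal $(\G,\H)$-bibundle whose characteristic form is exactly $\Omega^P$. Conversely, given a target aligned principal $(\G,\H)$-bibundle $P$ with characteristic form $\Omega^P$, the two multiplicativity relations $\m_L^*\Omega^P = \pr_1^*\Omega^\G + \pr_2^*\Omega^P$ and $\m_R^*\Omega^P = \pr_1^*\Omega^P + \pr_2^*\Omega^\H$ are precisely Xu's compatibility conditions for a symplectic Morita equivalence; so the only thing that needs checking is that $\Omega^P$ is non-degenerate. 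Once that is established the two assignments are manifestly mutually inverse, since each merely reinterprets the pair (underlying biprincipal bibundle, closed $2$-form), and this yields the claimed bijection.

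First I would record the infinitesimal data. Let $A$ and $B$ be the Lie algebroids of $\G$ and $\H$; since $\G$ and $\H$ are symplectic, the associated IM-forms $\rho_A^*\colon A \to T^*M$ and $\rho_B^*\colon B \to T^*N$ defined by (\ref{eqn:IMFdef}) from $\Omega^\G$ and $\Omega^\H$ are \emph{isomorphisms} (the algebroids being $T^*M$ and $T^*N$). Left principality says the fibres of $\s^P\colon P \to N$ are the $\G$-orbits, so $\ker\dif\s^P$ is spanned by the infinitesimal generators $a_P$ of the left action ($a\in A_{\t^P(p)}$); right principality says the fibres of $\t^P\colon P \to M$ are the $\H$-orbits, so $\ker\dif\t^P$ is spanned by the generators $b_P$ of the right action.

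The heart of the argument is the infinitesimal moment map identity. Differentiating the left-multiplicativity relation along the unit section, exactly as in the derivation of (\ref{eqn:IMFdef}) from the multiplicative form theory of \cite{BImf}, gives (up to an overall sign inessential to the argument)
\[ \iota_{a_P}\Omega^P = (\t^P)^*\rho_A^*(a), \]
and symmetrically $\iota_{b_P}\Omega^P = (\s^P)^*\rho_B^*(b)$ from right-multiplicativity. Now let $X$ lie in the radical of $\Omega^P$ at a point $p$. Pairing $X$ with the right generators gives $0 = \Omega^P(b_P,X) = \rho_B^*(b)(\dif\s^P(X))$ for every $b$; since $\rho_B^*$ is an isomorphism onto $T^*_{\s^P(p)}N$ this forces $\dif\s^P(X)=0$, so $X\in\ker\dif\s^P$. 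By freeness of the left action we may then write $X = a_P(p)$ for a unique $a\in A_{\t^P(p)}$, and $0 = \iota_X\Omega^P = (\t^P)^*\rho_A^*(a)$. As $\t^P$ is a submersion, $(\t^P)^*$ is injective on covectors, whence $\rho_A^*(a)=0$ and so $a=0$; therefore $X=0$. This proves $\Omega^P$ is non-degenerate, which is the non-formal half of the correspondence.

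The main obstacle is precisely this moment map identity and the two isomorphism facts on which it rests: that $\rho_A^*$ is an isomorphism for a symplectic groupoid, and that left/right multiplicativity of $\Omega^P$ forces $\iota_{a_P}\Omega^P$ to be the pullback $(\t^P)^*\rho_A^*(a)$ with no correction terms. Both are standard consequences of the theory of multiplicative forms, but care is needed with signs and with the fact that (\ref{eqn:IMFdef}) is phrased via the target map; once these are pinned down, the remaining checks (that the multiplicativity relations match Xu's axioms and that the two assignments are inverse) are routine and I would leave them to the reader.
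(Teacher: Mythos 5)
Your proposal is correct, but it takes a genuinely different route from the paper's proof. The paper argues via a local trivialization: it picks a local section $e$ of $\s^P$ through the given point, sets $\beta = e^*\Omega^P$ and $f = \t^P \circ e$, writes $\Omega^P = \pr_1^*\Omega^\G + \pr_2^*\beta$ on $P|_U \isom \G \times_{\s,f} U$, and then kills a radical vector $(v_1,w_1)$ by a three-stage pairing argument that invokes the leafwise symplectic forms, the identity $\Omega^\G|_{\G_{\O_y}} = \t^*\omega^{\O_y} - \s^*\omega^{\O_y}$, the symplectic orthogonality of $\ker\dif\s$ and $\ker\dif\t$, and transversality of $f$ to the orbits. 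You instead run the classical moment-map argument for symplectic groupoid actions: the identities $\iota_{a_P}\Omega^P = (\t^P)^*\rho_A^*(a)$ and $\iota_{b_P}\Omega^P = (\s^P)^*\rho_B^*(b)$ follow from the two multiplicativity relations alone (evaluate them at the point $(\u(\t^P(p)),p)$ on the pairs $(a,0)$ and $(\dif\u\,\dif\t^P(Y),Y)$, exactly as in the derivation of (\ref{eqn:IMFdef})), and in particular require no non-degeneracy of $\Omega^P$, so there is no circularity; then surjectivity of $\rho_B^*$ onto $T^*N$ forces a radical vector into $\ker\dif\s^P$, principality identifies $\ker\dif\s^P$ with the image of the infinitesimal left action, and injectivity of $(\t^P)^*$ together with $\rho_A^*$ being an isomorphism finishes the job. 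Your route is shorter, treats the two actions symmetrically, and avoids both the local section and the leafwise forms; what it buys is conceptual clarity at the price of importing the infinitesimal multiplicativity computation from the theory of multiplicative forms, the one step you leave as a sketch (it does go through as you describe). The paper's computation, by contrast, stays entirely inside the Dirac-theoretic formalism it has set up (pullbacks and morphisms $(f,\beta)$ in $\DMan$), at the cost of a longer case analysis; both arguments ultimately rest on the same two standard facts about symplectic groupoids, namely $A \isom T^*M$ via $\rho_A^*$ and the identification of the fibres of $\s^P$ with the $\G$-orbits given by principality.
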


\begin{proof}

One direction is just Example~\ref{ex:sympl:morita}.
For the other direction, suppose $P$ is a target aligned principal $(\G,\H)$-bibundle.
We must show that $\Omega^P$ is symplectic at each $p \in P$. So fix $p$ and let $x=\s^P(p)  \in N$ and $y=\t(p)\in M$.
Suppose $e: U \to P$ is a local section of $\s^P$ around $x$ such that $e(x) = p$ and let $\beta := e^* \Omega^P$.
Next define $f:= \t^P \circ e$ and notice that
\begin{align*}
f^* L_M &= e^* {(\t^P)}^* L_M  \\
        &= e^* (L_P + \Omega) \\
        &= e^* L_P + \beta  \\
        &= e^* {(\s^P)}^* L_N + \beta = L_N + \beta \, .
\end{align*}
In other words, $(f,\beta): U \to M$ is a morphism in $\DMan$. Since $P$ is principal, $f$ is transverse to the orbits of $M$.

We will need these facts in a moment, but first we should use the section $e$ to `trivialize' our bibundle.

When $P$ is restricted to $U$, we can identify it with the trivial $\G$-bundle associated to this map.
That is,
\[ P|_U \isom \G \times_{\s,f} U, \text{ with $p$ corresponding to }(\u(f(x)),x) . \]
When $P|_U$ is written in this way, then we can use the left multiplicativity of $\Omega^P$ to see that
\[ \Omega^P = \pr_1^* \Omega^\G + \pr_2^* \beta.  \]
Hence, for any two vectors
\[ (v_i,w_i)\in T_p (\G \times_M N)=\{(v,w)\in T_{\u(f(x))}\G\times T_x N: \dif\s(v)=\dif f(w)\}, \]
we have that
\[ \Omega^P((v_1, w_1), (v_2, w_2)) = \Omega^\G(v_1, v_2) + \beta(w_1,w_2) \, . \]
Now suppose that $(v_1, w_1)$ is in the kernel of $\Omega^P$.
We will show that it must be zero by pairing it with a few careful choices of $(v_2,w_2)$.
First let us see what happens when $(v_2,w_2) = (v_2,0)$ for arbitrary $v_2 \in \ker \dif \s$. Then
\[ 0 = \Omega^\P((v_1,w_1),(v_2,0)) = \Omega^\G(v_1,v_2) \, . \]
Therefore, we can conclude that $v_1$ is $\Omega^\G$ orthogonal to $\ker \dif \s$. Since $\G$ is a symplectic groupoid, this implies that $v_1 \in \ker \dif \t$.

Now suppose $(v_2,w_2) = (\dif \u \dif f(w_2),w_2)$ for arbitrary $w_2 \in T \O_x$ tangent to orbit of $x$.
We can conclude that
\begin{equation}\label{eqn:symform1}
 0 = \Omega^\G(v_1, \dif \u \dif f(w_2)) + \beta(w_1,w_2) \, .
\end{equation}
Let $\G_{\O_y}=\s^{-1}(\O_y)=\t^{-1}(\O_y)$ be the restriction of $\G$ to the orbit $\O_y$ and let $\omega^{\O_y}$ be the leafwise symplectic form on the orbit.
For any symplectic groupoid, it turns out that
\[ \Omega^\G|_{\G_{\O_y}} = \t^* \omega^{\O_y} - \s^* \omega^{\O_y}. \]
Since both $v_1$ and $\dif \u \dif f(w_2)$ are tangent to $\G_{\O_y}$, we can conclude that
\begin{align*}
\Omega^\G(v_1, \dif \u \dif f(w_2)) &= \omega^{\O_y}(\dif \t (v_1), \dif f(w_2)) - \omega^{\O_y}(\dif \s (v_1), \dif f(w_2)) \\
                                   &= -\omega^{\O_y}(\dif \s (v_1), \dif f(w_2)) \\
                                   &= -\omega^{\O_y}(\dif f (w_1), \dif f(w_2)) \\
                                   &= -f^*\omega^{\O_y}(w_1,w_2) \\
                                   &= -\omega^{\O_x}(w_1,w_2) - \beta(w_1,w_2).
\end{align*}
In the second line we have use the fact that $v_1 \in \ker \dif \t$.
In the third line we used the fact that $(v_1,w_1)$ must be tangent to $\G \times_M U$.
The last line follows from the fact that $(f,\beta)$ is a morphism of Dirac manifolds.

Combining this with (\ref{eqn:symform1}) we get that
\begin{equation}\label{eqn:symform2}
 \omega^{\O_x}(w_1,w_2) = 0 \, .
\end{equation}
Recall that $w_2$ was an arbitrary vector tangent to $\O_x$.
Since $\omega^{\O_x}$ is symplectic, we can conclude that $w_1=0$.

So far we have shown that $(v_1,w_1) = (v_1,0)$ and that $v_1 \in \ker \dif \t$. It follows that $v_1 \in \ker \dif \s$.
We still need to show that $v_1= 0$.
To do this we will show that $\Omega^\G(v_1,v) = 0$ for arbitrary $v \in T_{\u(y)} \G$.
Since $\Omega^\G$ is symplectic, this will show that $v_1=0$.

First write $v$ in the form $v_A + v_\u$ for $v_A \in \ker \dif \s$ and $v_\u \in \mbox{Im}(\dif \u)$.
Let us see what happens when we pair it with $v_1$:
\[ \Omega^\G(v_1,v) = \Omega^\G(v_1,v_A) + \Omega^\G(v_1, v_\u)  = 0 + \Omega^\G(v_1, v_\u) \, . \]

Since $f$ is transverse to the foliation on $M$, we can write $v_\u = \dif \u \dif f(w) + \dif \u (v_\O)$, where $w \in T_x N$ and $v_\O \in T_y \O_y$.
Hence,
\begin{equation}\label{eqn:symform3}
\Omega^\G(v_1, v_\u) = \Omega^\G(v_1, \dif u \dif f(w)) + \Omega^\G(v_1, \dif \u(v_\O)) \, .
\end{equation}
Recall that we have assumed that $(v_1,0)$ is in the kernel of $\Omega^P$.
Therefore,
\begin{align*}
0 &= \Omega^P((v_1,0),(\dif{\u{}} \dif f (w),w)) \\
&= \Omega^\G(v_1, \dif \u \dif f(w)) + \beta(0,w) \\
 &= \Omega^\G(v_1, \dif \u \dif f(w)) \, .
\end{align*}
Therefore, we can conclude that the first summand on the right side of (\ref{eqn:symform3}) vanishes.
For the second summand, observe that both $v_1$ and $\dif\u(v_\O)$ are tangent to $\G_{\O_y}$ and so
\[ \Omega^\G(v_1,\dif\u(v_\O)) = \omega^{\O_x}(\dif \t (v_1), v_\O) -\omega^{\O_x}(\dif \s (v_1), v_\O) = 0 \, . \]
Hence, we conclude that $\Omega^\G(v_1,v)=0$.
Since $v$ was arbitrary and $\Omega^\G$ is symplectic, we conclude that $v_1 =0$.
So $\Omega^P$ is non-degenerate at $p$ and therefore symplectic.
\qed\end{proof}
%
%
\subsection{Weak equivalences}\label{subsection:weakequivalences}
Let $\G$ and $\H$ be D-Lie groupoids over $M$ and $N$.
Suppose $F: \H \to \G$ is a morphism of D-Lie groupoids covering $f: N \to M$.
Then we can construct the left principal $(\G, \H)$-bibundle:
\[ P_F := \G \times_{\s,f} N \, , \]
with the obvious commuting actions of $\G$ (on the left) and of $\H$ (on the right). We equip $P_F$ with the characteristic form:
\[ \Omega^{P_F} = \pr_1^* \Omega^\G + \pr_2^* \beta \, , \]
where $\beta$ is the gauge part of $f: N \to M$.
This is the same as the standard construction for Lie groupoids with the addition of the characteristic form.
The reader can easily check that these actions satisfy the axioms of a $(\G, \H)$-bibundle.

\begin{definition}
We say that a morphism of D-Lie groupoids $F: \H \to \G$ is a \emph{weak equivalence} if $P_F$ is a principal $(\G,\H)$-bibundle.
\end{definition}

In other words, a weak equivalence of D-Lie groupoids is a D-Lie groupoid morphism which gives rise to a Morita equivalence.
Later, it will be shown that these equivalences further correspond to an isomorphism of the underlying stacks.
The name weak equivalence is chosen because while they are not necessarily invertible as D-Lie groupoid morphisms, they become (weakly) invertible when passing to the 2-category of stacks.

Recall that $P_F$ is principal if and only if it is a principal bibundle of Lie groupoids.
Therefore, $F$ is a weak equivalence if and only if the underlying generalized map of Lie groupoids is a weak equivalence.
This immediately gives rise to a notion of symplectic weak equivalences.
\begin{example}[Symplectic Weak Equivalences]
Suppose $\G$ and $\H$ are symplectic groupoids (i.e., $\G$ and $\H$ are target aligned D-Lie groupoids and their characteristic 2-forms $\Omega^\G$ and $\Omega^\H$ are symplectic).
Then a weak equivalence $F: \H \to \G$ consists of a homomorphism of Lie groupoids, together with a closed 2-form $\beta$ on $N$ such that the following hold.
\begin{enumerate}[(a)]
\item $F: \H \to \G$ is fully faithful and essentially surjective.
\item $f: N \to M$ is transverse to $\pi_M$ (the Poisson structure on $M$).
\item $F^* \Omega^\G = \Omega^\H + \t^* \beta - \s^* \beta$.
\end{enumerate}
Condition (a) and (b) ensure that $F$ is a weak equivalence of the underlying Lie groupoids as per the usual definition.
That is, $P_F$ is principal as a Lie groupoid bibundle.
The last condition is the geometric condition for $F$ to consitute a morphism of D-Lie groupoid as per our discussion of D-Lie groupoid morphisms.
\end{example}
Composition of homomorphisms corresponds to the tensor product operation at the level of bimodules.
Given a left principal $(\G_1, \G_2)$-bibundle $P$ and a left principal $(\G_2, \G_1)$-bibundle $Q$.
Assume that $\G_1$, $\G_2$, $\G_3$, $P$ and $Q$ are all target aligned.
Thinking of the $\G_i$ as Lie groupoids then
\[ P \otimes Q := P \times_{M_2} Q / \G_2 \, , \]
where the action of $\G_2$ on $(p,q)$ is defined to be $g_2 \cdot (p,q) = (p \cdot g_2\inv , g \cdot q)$.
In order to equip $P \otimes Q$ into a target aligned left principal $(\G_1,\G_3)$-bibundle, we only need to equip it with a multiplicative 2-form $\Omega^{P \otimes Q}$.
Multiplicativity of $\Omega^P$ and $\Omega^Q$ with respect to the action of $\G_2$ ensures that
\[ \widetilde{\Omega} := \pr_1^* \Omega^P + \pr_2^* \Omega^Q \, , \]
is basic with respect to the action of $\G_2$ on $P \times_{M_2} Q$.
Hence, $\widetilde{\Omega}$ descends to a 2-form on $P \otimes Q$.
Left and right multiplicativity of $\Omega^{P \otimes Q}$ can easily be checked.

The following standard facts for Lie groupoids also hold for D-Lie groupoids.
\begin{itemize}
\item There is a 2-category whose objects are D-Lie groupoids, 1-morph\-isms are left principal bibundles, and 2-morphisms are bibundle isomorphisms.
\item The mapping $F \mapsto P_F$ is functorial (i.e. $P_{F \circ G} \isom P_F \otimes P_G$).
\item $P_F$ has a (weak) inverse if and only if $F$ is a weak equivalence.
\end{itemize}
\section{Stacks over \texorpdfstring{$\DMan$}{}}\label{section:stacks}
After all the previous apparatus, the treatment of stacks over $\DMan$ is nearly identical to the case of $\Man$.
We will borrow the notation and some proof outlines from~\cite{PXstacks} where the case of stacks over $\Man$ is treated.
To that end, our definition of a Grothendiek (pre)-topology, categories fibered in groupoids, and stacks will be essentially identical to those in~\cite{PXstacks} where we replace the site of manifolds with the new site of Dirac manifolds.

\subsection{Stacks and CFGs}

We will review a few basic definitions which will be useful for us in the sequel.
A \emph{category fibered in groupoids} (CFG) over a category $\C$ is a category $\X$ together with a functor $\pi: \X \to \C$ such that the following properties hold.
\begin{enumerate}[(C1)]
\item Given any morphism $f: M \to N$ in $\C$ and object $x$ in $\X$ such that $\pi(x) = N$, then there exists a object $y$ in $\C$ and a morphism $a: y \to x$ in $\X$ such that $\pi(a)=f$.
\item Given morphisms $f: M_1 \to M_2$ and $g: M_2 \to M_3$ in $\C$ together with $a:x \to z$ and $b:y \to z$ such that $\pi(a) = g \circ f$ and $\pi(b) = g$, then there exists a unique morphism $c$ such that $\pi(c) = f$ and $b \circ c = a$.
\end{enumerate}

A morphism of CFGs $\F: \X \to \Y$ is a functor which commutes with the projections to $\C$.
A morphism of CFGs is called an \emph{isomorphism} if it is an equivalence of categories.
A 2-morphism $\eta: \F_1 \to \F_2$ is a (necessarily invertible) natural transformation of functors.
Formally, this notion of morphisms and 2-morphisms makes CFGs over $\C$ into a strict 2-category with invertible 2-morphisms.
We will think of CFGs in this manner, so if we write $\X_1 \times_\Y \X_2$ a fiber product of CFGs, then we mean the 2-categorical fiber product (see~\cite{Mstacks} for an explicit construction of this operation).

We can associate to any object of $\C$ a CFG by letting $\X := \Hom_{\C}(-,M)$ (this is also known as the slice category of $M$).
This gives a fully faithful embedding of $\C$ into the 2-category of CFGs over $\C$.
A CFG is called \emph{representable} if it is isomorphic to some object in $\C$.

The object whose existence is asserted by (C1) is unique up to a unique isomorphism and is frequently called the \emph{pullback} of $x$ along $f$.
It is often convenient to make a choice of pullback which we usually denote $f^*x$ or $x|_{\pi(y)}$.
Making such a choice, allows us to identify objects of $\X$ over $M$ with morphisms of CFGs $M \to \X$ (see the remark at the end of section 2.1 in~\cite{PXstacks}).

Given $x: M \to \X$ and $y: N \to \X$ we call the associated fiber product $M \times_\X N$ (denoted $\Isom(x,y)$ in~\cite{PXstacks}) the \emph{symmetry bibundle} of $x$ and $y$.
When $x = y$ then $M \times_\X M$ is the \emph{symmetry groupoid} of $x$ and is canonically the space of arrows of a (strict) groupoid internal to CFGs\footnote{A more explicit construction of the groupoid structure can found in the proof of \emph{70. Proposition} in~\cite{Mstacks}.
The relevant portion can be found in the paragraph beginning with `Now we show the converse'.}.
The source and target maps of this groupoid structure are the right and left projections, respectively.
The multiplication morphism is constructed by observing that there is a canonical isomorphism
\[ (M \times_X M) \times_M (M \times_X M) \to M \times_\X M \times_\X M \, , \]
and then composing it with
\[ \pr_1 \times \pr_3: M \times_\X M \times_\X M \to M \times_\X M \, . \]

With the basics and notation for CFGs out of the way, we can now introduce stacks.
For this, we will need to equip $\C$ with a Grothendiek pre-topology.
A Grothendiek pre-topology is an assignment to each object $M$ in $\C$ of a collection of subsets of the set $\Hom(-,M)$ called \emph{covering families}.
This assignment must satisfy some properties.
\begin{enumerate}[(T1)]
\item If $f: M \to N$ is an isomorphism then $\{ f \}$ is a covering family.
\item If $\{ u_i: U_i \to M \}$ is a covering family of $M$ and $g: N \to M$ is any morphism, then $\{ \pr_2: U_i \times_M N \to N \}$ is a covering family of $N$.
\item If ${\{ f_i: U_i \to M \}}_{i \in I}$ is a covering families of $M$ and ${\{ V_{ij} \to U_i \}}_{j \in J_i}$ is covering family of $U_i$ for each $i$, then the compositions $\{ V_{ij} \to U_i \to M \}$ constitute a covering family of $M$.
\end{enumerate}

If $\C = \Man$, then we can give $\Man$ the following pre-topology: a covering $\{ u_i: U_i \to M \}$ of manifold $M$ is a collection of \'etale smooth maps $u_i$ whose images cover $M$.
This is the same pre-topology used in~\cite{PXstacks}.
Using the forgetful functor $\Pr_1: \DMan \to \Man$, we can also define a Grothendiek pre-topology on $\DMan$.
A collection of morphisms $\{ u_i: U_i \to M \}$ in $\DMan$ is a \emph{covering} of $M$ if and only if $\{ \Pr_1(u_i) \}$ is a covering of $\Pr_1(M)$.
Using our construction of the fiber product in $\DMan$, it is straightforward to check that this is a well defined Grothendiek pre-topology.

\begin{definition}
A CFG $\X$ over $\C$ is called a \emph{stack} if it satisfies:
\begin{enumerate}[(S1)]
\item Suppose $\{ i_a: U_a \to M \}$ is a covering of $M$.
Let $\{ P_a \}$ be a collection of objects in $\X$ over $U_a$ together with morphisms $\phi_{ab}: P_b|_{U_{ab}} \to P_a|_{U_{ab}}$ such that $\phi_{ab} \circ \phi_{bc} = \phi_{ac}$ when restricted to any triple intersection $U_{abc}$.
Then there exists an object $P$ over $M$ together with morphisms ${\{ \phi_a: P|_{U_a} \to P_a \}}_{a \in A}$ such that $\phi_{ab} \circ \phi_{b} = \phi_a$.
\item Let $P$ and $Q$ be objects in $\X$ over $M$ and suppose $\{ i_a : U_a \to M \}$ is a covering of $M$ in $\C$.
Suppose we have a collection of morphisms $F_a: P|_{U_a} \to Q|_{U_a}$ covering the identity such that:
\[ F_a|_{P|_{U_{ab}}} = F_b|_{Q|_{U_{ab}}} \, , \]
then there exists a unique morphism $F: P \to Q$ such that $F|_{P|_{U_a}} = F_a$.
\end{enumerate}
\end{definition}
From now on, we will always assume that all CFGs or stacks are over $\C = \DMan$ with the pre-topology described above.

We conclude this section with the following important proposition.
\begin{proposition}
Suppose $\G$ is a D-Lie groupoid.
Let $\B\G$ denote the category whose objects are left principal $\G$-bundles and morphisms are equivariant maps $F: P \to Q$.
Let the functor $\pi: \B \G \to \DMan$ send $F: P \to Q$ to $f:M \to N$.
Then $\B\G$ is a stack.
\end{proposition}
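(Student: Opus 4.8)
The plan is to verify the four axioms (C1), (C2), (S1), and (S2) directly, observing that each axiom is, after a change of notation, contained in one of the three technical lemmas on principal $\G$-bundles established above (with the gluing lemma accounting for both stack axioms). In this sense the real content of the statement has already been front-loaded into those lemmas, and what remains is essentially bookkeeping. First I would confirm the underlying structure: the objects of $\B\G$ are left principal $\G$-bundles, a morphism $F \colon P \to Q$ is an equivariant $\DMan$-morphism covering a base morphism $f \colon M \to N$, composition of equivariant maps covers the composite of base maps, and $\pi$ sends $F$ to $f$; hence $\pi \colon \B\G \to \DMan$ is a genuine functor.

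For the fibration axioms I would argue as follows. Given $f \colon M \to N$ in $\DMan$ and an object $P$ over $N$, Lemma \ref{lemma:gbundlepullback} furnishes a principal $\G$-bundle $f^*P$ over $M$ together with an equivariant morphism $f^*P \to P$ covering $f$; this is precisely the lift demanded by (C1). Axiom (C2) asks that, for base maps $f \colon M_1 \to M_2$ and $g \colon M_2 \to M_3$, objects $x, y, z$ over $M_1, M_2, M_3$, and equivariant maps $a \colon x \to z$ covering $g \circ f$ and $b \colon y \to z$ covering $g$, there be a unique equivariant $c \colon x \to y$ covering $f$ with $b \circ c = a$. This is word-for-word Lemma \ref{lemma:gbundlecartesian}, under the dictionary $f_1 = f$, $f_2 = g$, $(P_1, P_2, P_3) = (x, y, z)$, $G = a$, $F_2 = b$, and $F_1 = c$. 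Together (C1) and (C2) make $\B\G$ a CFG; in particular the unique-lifting content of (C2) forces every arrow to be cartesian and each fiber to be a groupoid, so $\B\G$ is genuinely fibered in groupoids.

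The two stack axioms are the two halves of the gluing lemma. Given a $\DMan$-covering $\{ i_a \colon U_a \to M \}$, bundles $P_a$ over $U_a$, and isomorphisms $\phi_{ab}$ satisfying the cocycle condition on triple overlaps, Lemma \ref{lemma:gbundleglueing}(a) produces a glued principal $\G$-bundle $P$ over $M$ with restriction isomorphisms $\phi_a$; this is exactly (S1). Symmetrically, the existence and uniqueness of a global equivariant morphism assembling a compatible family $\{ F_a \colon P|_{U_a} \to Q|_{U_a} \}$ is exactly Lemma \ref{lemma:gbundleglueing}(b), which is (S2). Hence $\B\G$ satisfies all of (C1), (C2), (S1), (S2) and is a stack.

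Since the geometry has already been isolated in the lemmas, I do not expect a substantive obstacle. The only points requiring care are matching the hypotheses verbatim — in particular checking that the coverings appearing in (S1) and (S2) are exactly the $\DMan$-coverings defined via $\Pr_1$ to which Lemma \ref{lemma:gbundleglueing} applies — and noting that (C1) and (S1) ask only for existence, so no coherent system of pullbacks or glued bundles need be chosen. The proofs of the lemmas reduce to the target-aligned case, but those reductions are internal to each lemma and impose no compatibility constraints across distinct bundles, so nothing further must be verified at the level of the stack.
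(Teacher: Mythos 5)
Your proposal is correct and takes essentially the same route as the paper, which likewise disposes of the CFG axioms by citing Lemma~\ref{lemma:gbundlepullback} (for (C1)) and Lemma~\ref{lemma:gbundlecartesian} (for (C2)), and of the stack axioms (S1)--(S2) by citing the two parts of Lemma~\ref{lemma:gbundleglueing}. Your write-up merely makes explicit the dictionary between the axioms and the lemmas that the paper leaves implicit.
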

\begin{proof}
That $\B\G$ satisfies the axioms of a CFG is the content of Lemma~\ref{lemma:gbundlepullback} and Lemma~\ref{lemma:gbundlecartesian}.
Furthermore, $\B \G$ is satisfies the axioms of a stack by Lemma~\ref{lemma:gbundleglueing}.
\qed\end{proof}
\subsection{Presentations and groupoids}
To define a presentation of a stack, we need to define a special class of morphisms which play the role of `surjective submersions' of stacks.

\begin{definition}
Suppose $F: \X \to \Y$ is a morphism of stacks.
We call $F$ a \emph{representable epimorphism} if and only if
\begin{enumerate}[(a)]
\item \emph{representable:} given any stack morphism $N \to \Y$ where $N \in \DMan$ is representable, then $\X \times_\Y N$ is representable and
\item \emph{epimorphism:} given any $y \in \Y$ over $M \in \DMan$, there is a covering $\{ U_i \}$ of $M$ such that $y|_{U_i}$ is in the image of $F$.
\end{enumerate}
\end{definition}

It is a standard fact that that representable epimorphisms of CFGs are stable under base changes.
Furthermore, one can check without much difficulty that a representable epimorphism of representable stacks is a surjective submersion.

Now suppose $\X$ is a stack and $M$ is an object of $\DMan$.
A representable epimorphism $p:M\to \X$ is called a \emph{presentation} of $\X$.
In such a case we may say that the stack $\X$ is \emph{Dirac differentiable} or \emph{presentable}.
When $p: M \to \X$ is the morphism associated to some object $x \in \X$ then we call $x$ a \emph{versal family} of $\X$.
In general, a stack admits a versal family if and only if it is presentable.
\begin{lemma}
Suppose $\G$ is a D-Lie groupoid.
Then $\B\G$ admits a versal family.
\end{lemma}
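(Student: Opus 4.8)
The plan is to exhibit an explicit versal family for $\B\G$. Recall that a versal family is an object $x \in \B\G$ whose associated morphism $p \colon M \to \B\G$ is a representable epimorphism; by the discussion preceding the statement, producing such an object is exactly equivalent to producing a presentation of $\B\G$. The natural candidate is the \emph{unit bundle}: I would take $\G$ itself, viewed as a left principal $\G$-bundle over $M$ via the structure maps $\s \colon \G \to M$ and $\t \colon \G \to M$, with left multiplication $\m \colon \G \times_M \G \to \G$ as the action. As a Dirac object this is just $\G$ with its characteristic form $\Omega^\G$, and its left-multiplicativity (Lemma~\ref{lemma:dliedata}) is precisely what makes $\G \to M$ a principal $\G$-bundle in $\DMan$. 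This gives a canonical object $x_{\G} \in \B\G$ lying over $M \in \DMan$.

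The substance of the proof is then to verify that the associated morphism $p \colon M \to \B\G$ is a representable epimorphism. For the epimorphism condition, given any $y \in \B\G$ over some $N$ — that is, a principal $\G$-bundle $P \to N$ — I would use the fact that, at the level of underlying Lie groupoids, a principal bundle admits local sections: there is a cover $\{U_i \to N\}$ such that $P|_{U_i}$ is trivializable. By Lemma~\ref{lemma:gbundlepullback} and the fact that coverings in $\DMan$ are detected by $\Pr_1$, each such local trivialization makes $y|_{U_i}$ isomorphic to a pullback of the unit bundle, hence in the image of $p$. For the representability condition, I would compute the fiber product $M \times_{\B\G} N$ for an arbitrary map $N \to \B\G$ classifying a bundle $P \to N$, and identify it with an honest object of $\DMan$. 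As in the classical theory over $\Man$, this fiber product should be identified with the total space $P$ itself (with its Dirac structure), and the two projections recover $\s^P$ and $\t^P$; the symmetry groupoid $M \times_{\B\G} M$ associated to the unit bundle should recover $\G$, justifying the name `versal.'

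The main obstacle is bookkeeping the gauge data through these identifications rather than any deep geometric difficulty. The underlying smooth statements are standard facts about principal bundles over Lie groupoids, so the real work is checking that the Dirac structures and characteristic forms match up under each identification — for instance, that the Dirac structure on the fiber product $M \times_{\B\G} N$ computed via the $\DMan$-fiber-product formula~(\ref{eqn:fiberproduct}) agrees with the characteristic form $\Omega^P$ on $P$, and that the isomorphisms $y|_{U_i} \cong p^* (\cdots)$ respect gauge parts in the sense dictated by Lemma~\ref{lemma:gbundlecartesian}. Here I expect to lean heavily on the uniqueness clauses already established: once the smooth maps are fixed, the gauge parts are forced, so most equalities reduce to the source-compatibility equations of the form appearing in the proof of Lemma~\ref{lemma:gbundlecartesian}. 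I would therefore organize the argument so that each Dirac-theoretic verification is reduced to a previously proven uniqueness statement rather than a fresh computation.
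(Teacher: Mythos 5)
Your proposal is correct and follows essentially the same route as the paper: take the unit bundle $\G \to M$ as the versal family, get the epimorphism property from local triviality of principal $\G$-bundles, and get representability by identifying the stacky fiber product with an honest Dirac manifold. The only (cosmetic) difference is that you identify $M \times_{\B\G} N$ with the total space $P$ for an arbitrary classifying map $N \to \B\G$, whereas the paper first shrinks $N$ so that $P \cong g^*\G$ is trivial and exhibits the fiber product as $\G \times_{\s,g} N$ --- your version is, if anything, slightly cleaner since it avoids that reduction, and the two identifications agree on trivializing opens.
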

\begin{proof} The proof is similar to the usual case of $\Man$:

Consider $\G$ as a trivial $\G$-bundle over its space of objects $M$. To any morphism $f: N \to M$ we can pullback $\G$ to obtain the `trivial' $\G$ bundle associated to the map $f$.
This gives us a functor $M \to \B\G$ as CFGs.
Now consider any other CFG map $N \to \B\G$ where $N$ is representable.
Without loss of generality, we can assume that the map $N \to \B\G$ is the morphism corresponding to some $P \in \B\G$ over $N$.
Furthermore, we can assume that $N$ is small and therefore that $P = g^* \G$ is a trivial principal bundle associated to some map $g: N \to M$. By constructing $M \times_{\B\G} N $ explicitly, we see that it is isomorphic to $\Hom(-,\G \times_{\s,g} N)$. Therefore $M \to \B\G$ is representable.
It is also clearly an epimorphism since every $\G$ bundle is locally trivial.
\qed\end{proof}
As with Lie groupoids, this example is actually the universal case.
\begin{proposition}
Suppose $\X$ is a Dirac differentiable stack and $x: M \to \X$ is a versal family of $\X$, then $\G:=M \times_X M$ is a Lie groupoid and $\X \isom \B\G$.
\end{proposition}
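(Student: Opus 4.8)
The plan is to adapt the classical proof that a differentiable stack is the classifying stack of any of its presentations, tracking the Dirac data at each step. First I would check that $\G := M \times_\X M$ is a D-Lie groupoid. Because $x : M \to \X$ is a representable epimorphism, its base change along itself is representable, so $\G$ is an object of $\DMan$; the two projections $\t = \pr_1$ and $\s = \pr_2$ are base changes of $x$, and since a representable epimorphism of representable stacks is a surjective submersion, $\s$ and $\t$ are submersions in $\DMan$. The unit, inverse and multiplication are exactly the symmetry groupoid structure recalled before the definition of a stack; in particular the multiplication is the canonical isomorphism $(M \times_\X M) \times_M (M \times_\X M) \isom M \times_\X M \times_\X M$ followed by $\pr_1 \times \pr_3$. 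Every fiber product here is taken in $\DMan$ and so carries a canonical Dirac structure and gauge part, making all of these maps morphisms in $\DMan$; hence $\G$ is a groupoid object internal to $\DMan$ with submersive source and target, i.e. a D-Lie groupoid.

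Next I would build a functor $\Psi : \X \to \B\G$. To an object $y : N \to \X$ I assign $P_y := N \times_\X M$, which is an object of $\DMan$ by representability, with bundle projection $\s^{P_y} := \pr_1 : P_y \to N$ (a base change of $x$, hence a surjective submersion) and moment map $\t^{P_y} := \pr_2 : P_y \to M$. The left $\G$-action and the principality condition both flow from the canonical identification of $\G \times_M P_y$ with the triple fiber product $M \times_\X M \times_\X N$; concretely, principality is the statement that the shear map $\G \times_M P_y \to P_y \times_N P_y$ is the resulting canonical isomorphism, which holds in $\DMan$ because every fiber product is computed there. The characteristic form of $P_y$ is simply the gauge part carried by the $\DMan$-fiber product, so $\Psi$ requires no auxiliary choices.

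In the other direction I would define $\Phi : \B\G \to \X$ by \emph{descent}. Given a left principal $\G$-bundle $P \to N$, I choose a cover $\{ U_i \to N \}$ trivializing $P$, so that $P|_{U_i} \isom \G \times_{\s, g_i} U_i$ for classifying morphisms $g_i : U_i \to M$; composing with $x$ gives objects $x \circ g_i \in \X(U_i)$. An arrow of $\G = M \times_\X M$ is by construction an isomorphism between the corresponding objects in a fiber of $\X$, so the $\G$-valued transition data of $P$ supply isomorphisms $x \circ g_j|_{U_{ij}} \isom x \circ g_i|_{U_{ij}}$ satisfying the cocycle condition. The stack gluing axiom (S1) then produces an object $y \in \X(N)$, unique up to canonical isomorphism, and (S2) supplies the unique gluings of morphisms, so $\Phi$ is a morphism of CFGs over $\DMan$.

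Finally I would verify that $\Psi$ and $\Phi$ are quasi-inverse, which yields $\X \isom \B\G$. For $\Phi \circ \Psi \isom \Id_\X$, the bundle $P_y = N \times_\X M$ carries its moment map to $M$; composing with $x$ and using the universal property of $N \times_\X M$ together with the fact that $x$ is an epimorphism recovers $y$ up to a canonical $2$-isomorphism. For $\Psi \circ \Phi \isom \Id_{\B\G}$, I would reassemble $N \times_\X M$ from the local trivializations of a given $P$ and identify it with $P$, using Lemmas~\ref{lemma:gbundlepullback},~\ref{lemma:gbundlecartesian} and~\ref{lemma:gbundleglueing} to carry out the pullback, the uniqueness of the comparison map, and the gluing coherently. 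I expect the main obstacle to be the bookkeeping of characteristic forms through these identifications: one must confirm that the object produced by (S1) carries exactly the characteristic form dictated by the fiber-product Dirac structure, and that the natural transformations witnessing the two composites are morphisms in $\DMan$ and not merely in $\Man$. These three Lemmas were proved precisely to guarantee this compatibility, so once they are invoked the Dirac-geometric content collapses onto the standard smooth argument.
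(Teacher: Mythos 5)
Your proposal is correct and takes essentially the same approach as the paper: the paper proves that $\G = M \times_\X M$ is a D-Lie groupoid exactly as you do (representability of the base change, the symmetry-groupoid structure, and the projections being surjective submersions), and for $\X \isom \B\G$ it simply invokes the classical argument (Theorem 2.22 of~\cite{PXstacks}), observing that it carries over verbatim to $\DMan$ because D-Lie groupoids and their principal bundles were defined purely categorically. What you wrote is precisely that classical argument unpacked — the fiber-product bundle functor $y \mapsto N \times_\X M$ in one direction, descent via (S1)/(S2) and Lemmas~\ref{lemma:gbundlepullback}, \ref{lemma:gbundlecartesian} and \ref{lemma:gbundleglueing} in the other — so the two proofs coincide in substance.
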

\begin{proof}
For the first part, note that by definition $M \times_X M$ must be representable.
Furthermore, we observed earlier that it is a groupoid internal to stacks over $M$.
Finally since the projections $\pr_i: M \times_X M \to M$ are surjective submersions, we can conclude that $\G$ is a D-Lie groupoid.

For the second part, the proof can proceed identically to the proof of Theorem 2.22 in~\cite{PXstacks} where we replace the site of smooth manifolds with $\DMan$.
Note that this proof works by constructing categorical objects, and then observing that since they are representable stacks, they must coincide with geometric objects.
For example, when the underlying site is smooth manifolds a representable principal $\G$-bundle internal to the category of stacks is just an ordinary principal bundle.
Since we have defined D-Lie groupoids and principal bundles of D-Lie groupoids in a purely categorical way, the existing proof can be used without modification.
\qed\end{proof}
The next theorem shows that the notion of Morita equivalence and stack isomorphism are equivalent.
\begin{theorem}\label{thm:stackequiv}
Suppose $\G \rightrightarrows M$ and $\H \rightrightarrows N$ are D-Lie groupoids.
Then the following are equivalent:
\begin{enumerate}[(i)]
\item $\B \G$ and $\B \H$ are isomorphic.
\item There exists a principal $(\G, \H)$-bibundle.
\item There exists a D-Lie groupoid $\K$ and weak equivalences $F_1: \K \to \G$ and $F_2:\K \to \H$.
\end{enumerate}
\end{theorem}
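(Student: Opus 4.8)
The plan is to prove the theorem as a cycle of implications $(i) \Rightarrow (ii) \Rightarrow (iii) \Rightarrow (i)$, exploiting the machinery already developed for D-Lie groupoids, which has been set up so that every construction from the classical Lie groupoid theory carries over once the characteristic 2-forms are tracked correctly. The overarching strategy is that each step reduces to a known statement about Lie groupoids at the level of $\Pr_1$, plus a verification that the gauge parts (characteristic forms) are compatible.

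\smallskip

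First I would establish $(i) \Rightarrow (ii)$. Given an isomorphism of stacks $\Phi: \B\G \to \B\H$, I would apply it to the versal family of $\B\G$. Recall that $\G$, viewed as a trivial $\G$-bundle over $M$, is a versal family, so it corresponds to a representable epimorphism $M \to \B\G$. Composing with $\Phi$ gives a representable epimorphism $M \to \B\H$, i.e.\ an object $P$ of $\B\H$ over $M$: this is a principal $\H$-bundle over $M$. Since $\Phi$ is an isomorphism, $M \times_{\B\G} M \isom M \times_{\B\H} M$, and the left-hand side is (up to isomorphism) $\G$ while the bundle $P$ inherits a compatible left $\G$-action from the symmetry groupoid structure. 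The key point is that $P$ becomes a principal $(\G,\H)$-bibundle; concretely, the $\G$-action arises because $\Phi$ intertwines the symmetry groupoids $M \times_{\B\G} M$ and $M \times_{\B\H} M$. This is precisely the argument one uses for ordinary differentiable stacks (e.g.\ as in~\cite{PXstacks}), and since all our objects are defined purely categorically, the same argument applies verbatim.

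\smallskip

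For $(ii) \Rightarrow (iii)$, given a principal $(\G,\H)$-bibundle $P$, I would construct an intermediate D-Lie groupoid $\K$ together with weak equivalences $F_1: \K \to \G$ and $F_2: \K \to \H$ realizing $P$. The natural candidate for $\K$ is the \emph{action groupoid} (or a fibered-product groupoid) built from $P$: at the level of $\Man$ one takes $\K \rightrightarrows P$ where arrows are pairs recording the commuting left and right actions, i.e.\ $\K = \G \times_M P \times_N \H$ with the evident structure maps. One then uses Lemma~\ref{lemma:dliedata} to equip $\K$ with a D-Lie groupoid structure by specifying an appropriate gauge pair built from $\Omega^\G$, $\Omega^\H$, and $\Omega^P$; the two projections to $\G$ and to $\H$ furnish $F_1, F_2$. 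That these are weak equivalences amounts, by the definition of weak equivalence, to checking that the associated bibundles $P_{F_1}$ and $P_{F_2}$ are principal, which reduces (since principality is detected by $\Pr_1$) to the classical fact that both legs of such a span of Lie groupoids are weak equivalences when $P$ is a bibundle. The gauge-form compatibility, i.e.\ equation~(\ref{eqn:morphism}) for each $F_i$, must be verified from the left/right multiplicativity of $\Omega^P$.

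\smallskip

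Finally, $(iii) \Rightarrow (i)$ follows from the functoriality of $\B$ together with the last bulleted fact in Section~\ref{subsection:weakequivalences}: a weak equivalence $F$ induces a bibundle $P_F$ with a weak inverse, hence $\B F$ is an isomorphism of stacks. Explicitly, weak equivalences $F_1: \K \to \G$ and $F_2: \K \to \H$ yield isomorphisms $\B\K \isom \B\G$ and $\B\K \isom \B\H$, and composing gives $\B\G \isom \B\H$. To make this rigorous I would invoke that $F \mapsto P_F$ is functorial ($P_{F \circ G} \isom P_F \otimes P_G$) and that $P_F$ is invertible precisely when $F$ is a weak equivalence, so that tensoring with $P_F$ (equivalently, the induced functor on principal bundles) is an equivalence of categories. \textbf{I expect the main obstacle to be the $(ii) \Rightarrow (iii)$ step}, specifically the careful construction of the gauge pair on $\K$ and the verification that the two projections satisfy the morphism gauge equation~(\ref{eqn:morphism}); the bookkeeping of pulling back and comparing the three characteristic forms along the several structure maps of $\K$ is where all the genuine content of the Dirac/symplectic refinement lives, whereas the $\Man$-level statements are standard.
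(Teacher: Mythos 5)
Your proposal is correct and takes essentially the same route as the paper: the same cycle $(i)\Rightarrow(ii)\Rightarrow(iii)\Rightarrow(i)$, with the bundle built from the versal families (your $P$ is exactly the paper's $M \times_{\B\H} N$) for $(i)\Rightarrow(ii)$, an intermediate D-Lie groupoid over $P$ for $(ii)\Rightarrow(iii)$, and tensoring with the invertible bibundle $P_F$ for $(iii)\Rightarrow(i)$. The only cosmetic difference is your model for $\K$: you take the action groupoid $\G \times_M P \times_N \H$, whereas the paper takes the pullback groupoid $P \times_{\s^P,\t^\H} \H \times_{\s^\H,\s^P} P$ with characteristic form pulled back from $\H$ along the central projection; by principality of the left $\G$-action these two groupoids are canonically isomorphic, so the arguments coincide.
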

\begin{proof}
This result can be thought of as an analogue of Theorem 2.26 in~\cite{PXstacks}.

$(i) \Rightarrow (ii)$.
Let $\F: \B \G \to \B\H$ be an isomorphism.
Now consider the representable stack $P:= M \times_{\B\H} N \isom M \times_{\B\G} N$.
$P$ inherits a canonical principal left and right action of $\G$ and $\H$ respectively.
Hence $P$ is a principal $(\G,\H)$-bibundle.

$(ii) \Rightarrow (iii)$.
Now suppose we are given a principal $(\G,\H)$-bibundle $P$.
Let $\K$ be defined to be the groupoid constructed by pulling back $\H$ along $\s^{P}$
\[ \K := P \times_{\s^P,\t^\H} \H \times_{\s^\H, \s^P} P  \, .\]
Let $F_1: \K \to \H$ be defined to be projection to the center component.
The characteristic form of $\K$ can be constructed by pulling back the characteristic form of $\H$ along the central projection.
It is easy to check that this is still a multiplicative form and so $\K$ is a D-Lie groupoid.
Furthermore, we can see that $F_1$ is a weak equivalence.

Now we define $F_2: \K \to \G$ at the level of sets to be the unique map satisfying $F_2(p,h,q) \cdot p = q \cdot h$.
That this is a well defined weak equivalence follows from the fact that the left and right multiplication maps on $P$ are principal.
Finally, using the definition of $F_2$ we observe that $F_2$ satisfies
\[ F_2^* \Omega^\G = \Omega^{\K} + {(\t^{\K})}^*\Omega^P - {(\s^{\K{}})}^* \Omega^P \]
and so $F_2$ can be viewed as a morphism of D-Lie groupoids.

$(iii) \Rightarrow (i)$.
It suffices to show that given any weak equivalence of D-Lie groupoids $F: \G \to \H$, we can construct a stack isomorphism $\til F: \B \G \to \B\H$.
Recall that given any morphism, there is an associated invertible bimodule $(\H,\G)$-bibundle $P_F$.
Then we define $\til F(P):= P_F \otimes P$.
Since $P_F$ is invertible, this functor has a weak inverse of the form $P \mapsto (P_F)\inv \otimes P$ and hence is an isomorphism of stacks.
\qed\end{proof}
\begin{example}[The Stack of a Poisson Manifold]
We saw earlier that there is a one-to-one correspondence between symplectic groupoids $(\G, \Omega)$ integrating a Poisson manifold $(M, \pi)$ and target aligned D-Lie groupoid $\G$ with a symplectic characteristic form.
Furthermore, notice that Theorem~\ref{thm:main1} is an immediate corollary of Proposition~\ref{prop:symform} and Theorem~\ref{thm:stackequiv}.

From this point of view, if $\G$ is a proper symplectic groupoid then $\B\G$ is the analogous in $\DMan$ of the notion of a \emph{separated stack}.
The space of objects of such proper symplectic groupoids are the \emph{Poisson manifolds of compact type}, studied by Crainic, Fernandes and Martinez-Torrez in~\cite{PMCT1,PMCT2}.
\end{example}
\begin{example}[A non-presentable stack]
One weakness of the category $\DMan$ is that it lacks a terminal object.
This is remedied by passing to stacks over $\DMan$.
In fact, the category $\DMan$ equipped with the identity projection is a terminal object in the 2-category of stacks.
This is, perhaps, the simplest example of a stack over $\DMan$ which does not admit a presentation.
\end{example}
The above results should convice the reader that our notion of stack suitably captures the existing theory of symplectic Morita equivalences.
In the next section, we will see that these results also lead to a natural notion of \emph{infinitesimal} symplectic Morita equivalence.
\section{Infinitesimal weak equivalences}\label{section:infweakequivalences}
To treat infinitesimal weak equivalences of Poisson manifolds, we first make a few comments about general algebroids.
We will say that a morphism of algebroids $F: A \to B$ is \emph{transverse} if it covers a smooth map $f: M \to N$ which is transverse to the orbit foliation of $B$.
Our goal is to provide an infinitesimal criteria for a morphism of Lie algebroids to integrate to a weak equivalence of Lie groupoids.
We begin with a standard lemma.
\begin{lemma}\label{lemma:justisolemma}
Let $A$ and $B$ be integrable algebroids and suppose $F: A \to B$ is an algebroid morphism covering $f: M \to N$.
Then $F$ integrates to a weak equivalence $\F: \Sigma(A) \to \Sigma(B)$ if and only if the following hold for all $x \in M$:
\begin{enumerate}[(a)]
\item $F$ is transverse;
\item $F$ induces a homeomorphism of orbit spaces $M / A \to N / B$;
\item $\F_x: {\Sigma(A)}_x \to {\Sigma(B)}_x$ is an isomorphism.
\end{enumerate}
\end{lemma}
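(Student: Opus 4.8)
The plan is to reduce the statement to the classical characterization of weak equivalences of Lie groupoids. Recall that a homomorphism $\F \colon \Sigma(A) \to \Sigma(B)$ covering $f$ makes $P_\F$ a principal bibundle precisely when $\F$ is \emph{essentially surjective} and \emph{fully faithful}; that is, when $\Sigma(B)\times_{\s,f} M \to N$, $(h,x)\mapsto \t(h)$, is a surjective submersion, and when the map $\Phi\colon g \mapsto (\F(g), \t(g), \s(g))$ is a diffeomorphism $\Sigma(A) \isom \Sigma(B)\times_{N\times N}(M\times M)$. I would organize the entire argument around verifying these two properties and matching each to the conditions (a)--(c).

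For the forward implication, suppose $\F$ is a weak equivalence. Since the orbit space is a Morita invariant, $\F$ induces a homeomorphism $M/A \to N/B$, which gives (b). Essential surjectivity forces the orbit map to be surjective (already contained in (b)) and, evaluating its submersion condition at the identities $(1_{f(x)}, x)$, unwinds to $\dif f(T_x M) + \rho_B(B_{f(x)}) = T_{f(x)} N$, which is exactly transversality of $f$; this is (a). Finally, restricting the fully faithful pullback square to the fibres over $(x,x)$ and $(f(x),f(x))$ identifies $\Sigma(A)_x$ with $\Sigma(B)_{f(x)}$, giving (c).

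For the converse, assume (a), (b), (c). Essential surjectivity is immediate: (b) makes the orbit map surjective and (a) supplies the submersion condition as above. The real work is fully faithfulness, which I would establish in two stages. First, at the set level: for $x,y\in M$ the hom-set $\Sigma(A)(x,y)$ is nonempty iff $x,y$ lie in a common $A$-orbit, which by the injectivity half of (b) happens iff $f(x),f(y)$ lie in a common $B$-orbit, i.e.\ iff $\Sigma(B)(f(x),f(y))$ is nonempty; and when nonempty each is a torsor over the corresponding isotropy group with $\F$ equivariant over the bijection $\F_x$ of (c), so the induced map of hom-sets is a bijection and $\Phi$ is bijective. Second, I would show $\Phi$ is a local diffeomorphism: by left translation it is enough to check $\dif\Phi$ at the units $1_x$, where splitting the tangent spaces into isotropy and normal directions shows $\dif\Phi_{1_x}$ is an isomorphism precisely when the isotropy Lie algebra map $F_x\colon \g_x \to \g_{f(x)}$ is an isomorphism (obtained by differentiating (c)) and the induced map of normal spaces $T_x M/\rho_A(A_x) \to T_{f(x)} N/\rho_B(B_{f(x)})$ is an isomorphism (surjective by transversality (a), injective by the infinitesimal injectivity coming from (b)). A bijective local diffeomorphism is a diffeomorphism, so $\Phi$ is a diffeomorphism and $\F$ is fully faithful.

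The main obstacle is the manifold-level fully faithfulness in the converse: upgrading the set-theoretic bijection of hom-sets to a genuine diffeomorphism. All three hypotheses conspire here --- (c) controls the isotropy directions while (a) and (b) together pin down the transverse (normal) directions --- and the delicate point is the clean splitting of $T_{1_x}\Sigma(A)$ into isotropy and normal pieces so that one can read off that $\dif\Phi_{1_x}$ is an isomorphism. Everything else is either a standard Morita-invariance fact or a routine unwinding of the submersion and pullback conditions.
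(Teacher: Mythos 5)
Your proposal is correct, and its skeleton is the same as the paper's: both proofs reduce the statement to deciding when $P_\F=\Sigma(B)\times_{\s,f}M$ is principal and then match the resulting conditions against (a)--(c). The decompositions differ, though. The paper splits right-principality into three set-theoretic pieces --- $\t\circ\pr_1$ a surjective submersion (accounting for (a) and surjectivity on orbit spaces), freeness of the right action (injectivity of $\F_x$), and transitivity on the $\t^{P_\F}$-fibers (injectivity on orbit spaces plus surjectivity of $\F_x$) --- and, in the forward direction, gets openness of the orbit map from transversality, where you instead invoke Morita invariance of the orbit space. You work with the essentially-surjective/fully-faithful characterization, and your second stage --- upgrading the set bijection $\Phi\colon\Sigma(A)\to\Sigma(B)\times_{N\times N}(M\times M)$ to a diffeomorphism by translating to units along bisections and computing $\dif\Phi_{1_x}$ --- engages the smooth (diffeomorphism) requirement in the definition of principality, which the paper's purely set-theoretic argument never explicitly verifies. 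That portion of your argument is a genuine strengthening rather than a rephrasing.

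The one place where you owe an argument is the claim that the normal map $T_xM/\rho_A(A_x)\to T_{f(x)}N/\rho_B(B_{f(x)})$ is injective ``by the infinitesimal injectivity coming from (b).'' Condition (b) is purely topological, and a homeomorphism of orbit spaces does not formally linearize; the claim is true here, but it needs a proof. Two ways to close the gap: (i) bypass (b) at this step entirely --- your first stage already shows $\Phi$ is a continuous bijection of manifolds, so invariance of domain forces $\rank A=\rank f^{!}B$, and then it suffices that $\dif\Phi_{1_x}$ be injective; its kernel is exactly $\ker(F_x\colon\g_x\to\g_{f(x)})$, which is the Lie algebra of $\ker\F_x$ and hence trivial by (c). Or (ii) prove the claim directly: choose a transversal $W$ to $\O_x$; by (a) the preimage under $f|_W$ of a plaque of $\O_{f(x)}$ is a submanifold of $W$ through $x$ of dimension $\mathrm{codim}\,\O_x-\mathrm{codim}\,\O_{f(x)}$; injectivity of the orbit map from (b) forces this preimage into $\O_x\cap W$, and since orbits are initial (weakly embedded) submanifolds its tangent space at $x$ lies in $T_x\O_x\cap T_xW=0$; hence the codimensions agree and the normal map, already surjective by (a), is an isomorphism. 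With either patch your proof is complete.
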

\begin{proof}
Recall that a groupoid morphism $\F:\Sigma(A) \to \Sigma(B)$ is a weak equivalence if
\[ P_\F := \Sigma(B) \times_{\s,f} M \]
is a principal bibundle.
The left action is always principal, so the only requirement is that the right action is also principal.
That is, $\t^{P_\F}:P_\F\to M$ is a surjective submersion such that the right action is free and transitive over its fibers.

First, we observe that:
\begin{itemize}
  \item $\t^{P_\F}:=\t \circ \pr_1: \Sigma(B) \times_{\s,f} M \to M$ is a surjective submersion if and only if $f$ is transverse to the foliation of $M$ and $\F$ is surjective at the level of orbits.
  This is a fairly straightforward fact to check.
  Surjectivity comes from the orbit condition while the transversality condition ensures that the map is a submersion.
  \item The right action of $\Sigma(A)$ is free if and only if $\F_x$ is injective.
  This follows immediately from the definition of the right action.
\end{itemize}
We claim that the right action of $\Sigma(A)$ is transitive over the fibers of $\t^{P_\F}$ if and only if $\F$ is injective at the level of orbits and $\F_x$ is surjective, which will complete the proof:

$(\Leftarrow)$. Suppose $(g_1,x_1)$ and $(g_2,x_2)$ are in the same $\t^{P_\F}$ fiber.
Then $g_1\inv g_2$ is an arrow from $f(x_2)$ to $f(x_1)$.
Since $\F$ is injective at the level of orbits, $x_1$ and $x_2$ must be in the same $\Sigma(A)$ orbit.
Furthermore, since $\F_x$ is surjective, then there must exist some $h: x_1 \to x_2$ such that $f(h) = g_1\inv g_2$.
The definition of the right action of $\Sigma(A)$ makes it clear that $(g_1,x_1) \cdot h = (g_2,x_2)$ and so the action is transitive.

$(\Rightarrow)$. Suppose the right action is transitive.
If $f(x_1)$ and $f(x_2)$ lie in the same $\Sigma(B)$ orbit then there must be some $g: f(x_2) \to f(x_1)$.
Since the action is transitive, there must be some $h:x_2 \to x_1$ such that $(1_{f(x_1)},x_1) \cdot h = (g,x_2)$ and so $x_1$ and $x_2$ are in the same orbit.
This shows the map of orbit spaces is injective.
$\F_x$ must be surjective since for any $g \in {\Sigma(B)}_{f(x)}$ there must exist an $h$ such that $(1_{f(x)}, x) \cdot h = (g,x)$ which implies that $h$ is a preimage of $g$.
These three bullets together show our lemma if we replace homeomorphism with continuous bijection.
However, transversality of $\F$ implies that the map of orbit spaces is also open in a manner analgous to the fact that submersions are open maps.
\qed\end{proof}

This proposition shows that, in order to obtain an infinitesimal criteria for $\F$ to be a weak equivalence, we need to understand under what conditions $\F_x$ is an isomorphism.
So we turn to this question.
\subsection{Monodromy}\label{subsection:monodromy}
We need to recall the \emph{monodromy groups} of Crainic and Fernandes~\cite{Cint}. We will use the same notations as in~\cite{Cint}.
So, given a Lie algebroid $A$, we denote by $\G(\g_x)$ the source simply connected integration of the isotropy Lie algebra of $A$ at $x \in M$.
\begin{definition}\label{defn:monodromy}
Let $A$ be an integrable Lie algebroid.
The \emph{monodromy} of $A$ at $x \in M$ is the kernel $\N_x(A)$ of the canonical map $\G(\g_x) \to {\Sigma(A)}_x$.
\end{definition}
If we think of elements of $\G(\g_x)$ as $\g_x$-paths modulo $\g_x$-homotopy, the map $\G(\g_x) \to {\Sigma(A)}_x$ is the passage to $A$-homotopy.
The group $\N_x(A)$ fits into a short exact sequence.
\[
\begin{tikzcd}
1 \arrow[r] & \N_x(A) \arrow[r] & \G(\g_x) \arrow[r] &  {\Sigma(A)}^\circ_x \arrow[r] & 1.
\end{tikzcd}
\]

A morphism of algebroids $F: A \to B$ induces a map $\til F: \G(\g_x) \to \G(\g_{f(x)})$ and for any $g \in \G(\g_x) \in \N_x(A)$ we always have that $\til F(g) \in \N_{f(x)}(B)$.
Therefore, if $F: A \to B$ is a morphism of Lie algebroids, then for each $x \in M$ we obtain a commutative diagram.
\[
\begin{tikzcd}
1 \arrow[r] & \N_x(A) \arrow[r] \arrow[d]  & \G(\g_x) \arrow[r] \arrow[d, "\til F"] &  {\Sigma(A)}^\circ_x \arrow[r] \arrow[d, "\F_x"]  & 1 \\
1 \arrow[r] & \N_{f(x)}(B) \arrow[r]       & \G(\g_{f(x)}) \arrow[r]                &  {\Sigma(B)}^\circ_{f(x)} \arrow[r]             & 1
\end{tikzcd}
\]

This leads immediately to:
\begin{lemma}\label{proposition:connectedcomponentofidentity}
Suppose $F: A \to B$ is an algebroid morphism of integrable algebroids and let $\F: \Sigma(A) \to \Sigma(B)$ be the integration of $F$.
Then the restriction $\F_x:{\Sigma(A)}^\circ_x \to {\Sigma(B)}^\circ_{f(x)}$ is an isomorphism if and only if both $\til F: \N_x(A) \to \N_{f(x)}(B)$ is an isomorphism and $F_x: \g_x \to \g_{f(x)}$ is an isomorphism.
\end{lemma}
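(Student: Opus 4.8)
The plan is to read off everything from the commutative diagram of short exact sequences displayed just above the statement, supplemented by the one piece of Lie theory that a purely diagrammatic argument cannot supply. Write $a$ for the left vertical map $\N_x(A) \to \N_{f(x)}(B)$, write $b$ for the middle map $\til F : \G(\g_x) \to \G(\g_{f(x)})$, and write $c$ for the right map $\F_x : {\Sigma(A)}^\circ_x \to {\Sigma(B)}^\circ_{f(x)}$. First I would observe that $b$ is the integration of the Lie algebra morphism $F_x$ between the \emph{simply connected} groups $\G(\g_x)$ and $\G(\g_{f(x)})$, so by Lie's theorems $b$ is an isomorphism if and only if $F_x$ is. This reduces the statement to be proved to the clean assertion that $c$ is an isomorphism if and only if both $a$ and $b$ are isomorphisms.

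For the direction $(\Leftarrow)$ I would run the short five lemma for groups. Assuming $a$ and $b$ are isomorphisms, surjectivity of $c$ follows from surjectivity of $b$ together with surjectivity of the projection $\G(\g_{f(x)}) \to {\Sigma(B)}^\circ_{f(x)}$; injectivity of $c$ comes from a diagram chase, lifting an element of $\ker c$ to $\G(\g_x)$, using commutativity to push it into $\N_{f(x)}(B)$, correcting it by an element of $\N_x(A)$ using that $a$ is onto, and then invoking injectivity of $b$ to force the original element to be trivial. None of these steps need the groups to be abelian, since $\N_x(A)$ is automatically normal in $\G(\g_x)$, being a kernel.

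For the direction $(\Rightarrow)$ I would argue in two stages. First, $c = \F_x$ is a morphism of finite-dimensional Lie groups whose associated Lie algebra morphism is exactly $F_x : \g_x \to \g_{f(x)}$, since the isotropy Lie algebra of ${\Sigma(A)}_x$ is $\g_x$ and the Lie functor sends $\F$ to $F$. Hence if $c$ is an isomorphism of Lie groups then $F_x$ is an isomorphism, and therefore $b$ is an isomorphism by the reduction above. Second, with $b$ and $c$ both isomorphisms, a final short-five-lemma chase identifies $a$ with the restriction of $b$ to the kernels: injectivity is inherited from $b$, and for surjectivity one lifts an element of $\N_{f(x)}(B)$ through $b$ and uses injectivity of $c$ to see that its preimage already lies in $\N_x(A)$.

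The only genuinely non-formal ingredient, and the step I expect to be the main obstacle to state cleanly, is the passage ``$c$ an isomorphism $\Rightarrow$ $F_x$ an isomorphism.'' This cannot come from the diagram alone, since abstractly $c$ can be an isomorphism while $a$ and $b$ are not; it is essential to use that $c$ is a morphism of Lie groups differentiating to $F_x$, together with the simple connectivity of $\G(\g_x)$ and $\G(\g_{f(x)})$ that makes the correspondence $b \leftrightarrow F_x$ faithful. Once this linchpin is secured, the rest is standard homological bookkeeping on the two rows of short exact sequences.
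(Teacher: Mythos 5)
Your proof is correct and takes essentially the same route as the paper, whose entire argument for this lemma is the phrase ``this leads immediately to'' following the displayed map of short exact sequences; your write-up supplies exactly the diagram chase and Lie-theoretic input that phrase suppresses. In particular, you rightly isolate the one non-formal step --- that $\F_x$ an isomorphism forces $F_x$, and hence $\til F$, to be one, using that $\F_x$ differentiates to $F_x$ and that $\G(\g_x)$ and $\G(\g_{f(x)})$ are simply connected --- which is precisely the ingredient the paper leaves implicit.
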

To obtain a condition for an isomorphism of the full isotropy groups, we observe that the group of connected components of ${\Sigma(A)}_x$ can be identified with $\pi_1(\O_x)$. In particular, for any algebroid $A$ we have another short exact sequence.
\[
\begin{tikzcd}
1 \arrow[r] & {\Sigma(A)}^\circ_x \arrow[r] & {\Sigma(A)}_x \arrow[r] & \pi_1(\O_x) \arrow[r] & 1
\end{tikzcd}
\]
Again, if $F: A \to B$ is a morphism of Lie algebroids, we get maps,
\[
\begin{tikzcd}
1 \arrow[r] & {\Sigma(A)}^\circ_x \arrow[r] \arrow[d, "\F"] & {\Sigma(A)}_x \arrow[r] \arrow[d, "\F"] & \pi_1(\O_x) \arrow[d, "f_*"] \arrow[r] & 1 \\
1 \arrow[r] & {\Sigma(B)}^\circ_{f(x)} \arrow[r] & {\Sigma(B)}_{f(x)} \arrow[r] & \pi_1(\O_{f(x)}) \arrow[r] & 1
\end{tikzcd}
\]
This leads immediately to:
\begin{lemma}
Suppose $F: A \to B$ is a transverse morphism of algebroids.
Suppose further that $\F_x: {\Sigma(A)}_x \to {\Sigma(B)}_{f(x)}$ restricts to an isomorphism of the connected components of the identity.
Then $\F_x$ is an isomorphism if and only if ${(f|_{\O_x})}_*: \pi_1(\O_x) \to \pi_1(\O_{f(x)})$ is an isomorphism.
\end{lemma}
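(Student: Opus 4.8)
The plan is to run an elementary diagram chase on the commutative ladder of short exact sequences displayed immediately before the statement, whose rows relate the connected component of the identity, the full isotropy group, and $\pi_1$ of the orbit, and whose three vertical arrows are (from left to right) the restriction of $\F_x$ to identity components, the map $\F_x$ itself, and $f_*$. By hypothesis the left-hand vertical arrow is an isomorphism, so the entire statement reduces to the assertion that, in such a ladder with the outer-left arrow invertible, the middle arrow is invertible precisely when the right arrow is.

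For the direction ($\Leftarrow$), I would assume $f_*$ is an isomorphism; then both outer vertical arrows are isomorphisms and the short five lemma gives at once that $\F_x$ is an isomorphism. For the direction ($\Rightarrow$), I would assume $\F_x$ is an isomorphism and verify surjectivity and injectivity of $f_*$ by hand. Surjectivity: every element of $\pi_1(\O_{f(x)})$ lifts to ${\Sigma(B)}_{f(x)}$, which is hit by $\F_x$, and pushing this preimage down to $\pi_1(\O_x)$ produces the desired lift by commutativity. Injectivity: if a class $c \in \pi_1(\O_x)$ dies under $f_*$, I would lift it to ${\Sigma(A)}_x$, observe that its image under $\F_x$ lands in ${\Sigma(B)}^\circ_{f(x)}$, pull this back along the (iso) left vertical arrow into ${\Sigma(A)}^\circ_x$, and use injectivity of $\F_x$ to conclude that the lift of $c$ already lay in ${\Sigma(A)}^\circ_x$, so that $c$ is trivial.

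The only genuine subtlety is that the isotropy groups ${\Sigma(A)}_x$ need not be abelian, so I would avoid quoting the five lemma for abelian categories and instead carry out the two chases above directly in the category of groups, where the relevant implications still hold. I would also note, for completeness, that the right-hand vertical arrow $f_*$ is well defined exactly because $F$ is transverse: transversality guarantees that $f$ restricts to a well-behaved smooth map $\O_x \to \O_{f(x)}$ of orbits, so that the induced homomorphism on fundamental groups, and hence the whole ladder, makes sense. No step here is hard; the content is purely the bookkeeping of the chase, with the non-abelian caveat being the one place to exercise care.
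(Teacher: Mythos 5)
Your proof is correct and is essentially the paper's own argument: the paper gives no proof beyond displaying the ladder of short exact sequences (the lemma is introduced with ``This leads immediately to:''), and your explicit chase --- the non-abelian short five lemma for the $(\Leftarrow)$ direction and the two hand chases for $(\Rightarrow)$ --- is precisely what is left implicit there. The only quibble is your closing remark: $f_*$ is well defined for \emph{any} algebroid morphism, since anchor compatibility $\rho_B \circ F = \dif f \circ \rho_A$ already forces $f(\O_x) \subseteq \O_{f(x)}$, so transversality is not what makes the ladder exist; it is a standing hypothesis carried along from the surrounding lemmas rather than an ingredient of this one.
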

We can now give a short proof of Theorem~\ref{thm:infweakequiv}.
\begin{proof}[of Theorem~\ref{thm:infweakequiv}]
The two preceeding propositions togeather imply that (c-e) are satisfied if and only if $F$ integrates to an isomorphism at the level of isotropy groups.
If we combine these facts with Lemma~\ref{lemma:justisolemma} we immediately arrive at the result.
\qed\end{proof}

We call an algebroid morphism $F: A \to B$ satisfying (a-e) a \emph{weak equivalence}.
Now observe that the monodromy of $A$ at $x$ is also defined for non-integrable algebroids.
In fact, the failure of $\N_x(A)$ to be discrete measures the failure of $\Sigma(A)$ to be smooth~\cite{Cint}.
Therefore, the definition of weak equivalence makes sense even when $A$ and $B$ are not integrable.
It is not yet known whether this definition of weak equivalence is fully satisfactory.
It would be hoped that such maps correspond to equivalences of higher categorical objects, and this is certainly true in the integrable case.
However, it is not completely clear what sort of stack or higher categorical object one should associate to a non-integrable algebroid and it is beyond the scope of this paper to discuss this issue (however, see~\cite{BStacky}).
\subsection{Weak equivalences of D-Lie algebroids}
Theorem~\ref{thm:infweakequiv} has the following immediate consequence:
\begin{corollary}
Suppose $F: A \to B$ is a morphism of integrable D-Lie algebroids.
Then $F$ integrates to a weak equivalence if and only if $F$ satisfies the (a-e) of Theorem~\ref{thm:infweakequiv}.
\end{corollary}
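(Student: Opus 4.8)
The plan is to reduce the statement to Theorem~\ref{thm:infweakequiv} using the fact that weak equivalence of D-Lie groupoids is detected entirely by the underlying Lie groupoids. Recall from Section~\ref{subsection:weakequivalences} that a morphism of D-Lie groupoids $\F$ is a weak equivalence precisely when the associated bibundle $P_\F$ is principal, and that $P_\F$ is principal if and only if its underlying bibundle of Lie groupoids is. Consequently, whether a D-Lie groupoid morphism is a weak equivalence depends only on the underlying Lie groupoid morphism and not on any characteristic-form data.

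First I would make precise what it means for the D-Lie algebroid morphism $F: A \to B$ to integrate. The underlying Lie algebroid morphism integrates to a groupoid homomorphism $\F: \Sigma(A) \to \Sigma(B)$ between the source simply connected integrations, and by functoriality of integration this $\F$ is exactly the underlying Lie groupoid morphism of the integration we seek. I would then verify that $\F$ upgrades to a morphism of the canonical (target aligned) D-Lie groupoids: the defining identity of a D-Lie algebroid morphism, $f^* \circ \rho_B^* \circ F = \rho_A^* + \beta^\flat$, is by design the infinitesimal form of the groupoid-level compatibility~(\ref{eqn:morphism}). Because $\Sigma(A)$ is source simply connected, a closed multiplicative 2-form is determined by its associated infinitesimal multiplicative form, so this infinitesimal identity integrates to~(\ref{eqn:morphism}); the lemma following~(\ref{eqn:morphism}) then supplies the unique gauge part making $\F$ a D-Lie groupoid morphism.

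With the integration in place, the equivalence is immediate. By the first paragraph, $\F$ is a weak equivalence of D-Lie groupoids if and only if its underlying homomorphism $\Sigma(A) \to \Sigma(B)$ is a weak equivalence of Lie groupoids; and by Theorem~\ref{thm:infweakequiv} the latter holds if and only if the underlying Lie algebroid morphism satisfies conditions (a-e). Since these conditions are phrased purely in terms of the underlying Lie algebroid morphism, which is precisely the morphism underlying $F$, the corollary follows.

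The only content beyond Theorem~\ref{thm:infweakequiv} is the integration step of the second paragraph: checking that the infinitesimal compatibility condition integrates to the multiplicative one. I expect this to be the main, though essentially routine, obstacle, and it rests on the injectivity of the infinitesimal-multiplicative-form assignment for source simply connected groupoids, which I would cite from~\cite{BImf} rather than reprove. Everything else is a formal consequence of results already established in the paper.
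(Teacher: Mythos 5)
Your proposal is correct and follows essentially the same route as the paper, which states this corollary without proof as an immediate consequence of Theorem~\ref{thm:infweakequiv} together with the observation (made in Section~\ref{subsection:weakequivalences}) that a D-Lie groupoid morphism is a weak equivalence precisely when its underlying Lie groupoid morphism is. Your second paragraph, integrating the infinitesimal compatibility condition to the multiplicative identity~(\ref{eqn:morphism}) via the uniqueness of IM-form integration on source simply connected groupoids, correctly fills in the one step the paper leaves implicit.
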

We can use this to define weak equivalences for D-Lie algebroids.
\begin{definition}
  A morphism of D-Lie algebroids $F: A \to B$ is a \emph{weak equivalence} when the underlying algebroid morphism is a weak equivalence.
  That is, $F$ satisfies (a-e) of Theorem~\ref{thm:infweakequiv}.
\end{definition}
\begin{example}[Poisson Manifolds]
Suppose $\pi_M$ and $\pi_N$ are Poisson structures on $M$ and $N$.
Then the corresponding Dirac structures $L_M$ and $L_N$ are also D-Lie algebroids.
Therefore, we call $f: M \to N$ a weak equivalence of Poisson manifolds if $f$ is a weak equivalence of their corresponding D-Lie algebroids.
\end{example}

We can now prove Theorem~\ref{thm:main2}.
\begin{proof}[of Theorem~\ref{thm:main2}]
First observe that, if $F: L_X \to L_M$ is a weak equivalence, we have
\[ L_X \isom f^! L_M := L_M \times_{\rho, \dif f} X \, . \]
This construction is sometimes called the \emph{pullback algebroid}.
When $L_M$ is integrable, then the pullback $f^! L_M$ is integrated by
\[ f^! \Sigma(M) := X \times_{f,\t} \Sigma(M) \times_{\s,f} X \, . \]
Therefore, if $f: X \to M$ is a weak equivalence and $M$ is integrable, then $X$ is integrable.
Hence, if $M \from X \to N$ are a pair of weak equivalences, then $M$ and $N$ are certainly Morita equivalent.

On the other hand, suppose $M$ and $N$ are Morita equivalent.
Then there exists a symplectic bibundle $M \from P \to N$.
Let $X := P$.
Since the fibers of $\t^P:P \to M$ and $\s^P :P \to N$ are simply connected and $\s^P$ and $\t^P$ submersions:
\[ {(\s^P)}^! \Sigma(M) \isom {(\t^P)}^! \Sigma(N) \isom \Sigma(X) \, . \]
Therefore, $\s^P$ and $\t^P$ are the unit maps of weak equivalences of D-Lie groupoids.
So they must be weak equivalences of Dirac manifolds.
\qed\end{proof}
\appendix
\section{Proof of Lemma \texorpdfstring{\ref{lemma:dliedata}}{}}\label{appendix1}
\begin{proof}
Suppose $\G$ with $\s,\t,\u,\m,\i$ is a Lie groupoid and $(\s,\sigma),(\t,\tau),(\u,\upsilon),(\i,\iota)$ are morphisms in $\DMan$.
This data constitutes a D-Lie groupoid if and only if the gauge equation associated to each groupoid axiom holds.
In the table below, we have enumerated the axioms of a groupoid and computed the corresponding equations of 2-forms.
\begin{center}
\rowcolors{1}{gray!25}{white}
\begin{tabular}{ |c|c|c|c|}
\hline\noalign{\smallskip}

            & Axiom & Domain & Gauge Part  \\
\noalign{\smallskip}\hline\noalign{\smallskip}
(G1) & $\s \circ \u = \Id_M$ & $M$ & $\u^* \sigma + \upsilon = 0$\\
(G2) & $\s \circ \m = \s \circ \pr_2$ &$ \G^{(2)}$& $\m^* \sigma + \mu = \pr_1^* \sigma + \pr_2^* \sigma$\\
(G3) & $\s \circ \i = \t$&$ \G$& $\i^* \sigma + \iota = \tau$\\
(G4) & $\i \circ \u = \u$&$ M$ &$\u^* \iota + \upsilon = \upsilon$ \\
(G5) & $\m \circ ((\u \circ \t) \times \Id_\G) = \Id_\G$&$ \G$ & ${((\u \circ \t) \times \Id)}^* \mu = {(\u \circ \t)}^* \sigma$  \\
(G6) & $\m \circ (\Id_\G \times (\u \circ \s)) = \Id_\G$&$ \G$& ${(\Id \times (\u \circ \s) )}^* \mu = {(\u \circ \s)}^* \tau$\\
(G7) & $\m \circ (\i \times \Id_\G) = \u \circ \s$&$ \G$&${(\i \times \Id)}^* \mu - \i^* \sigma = \s^* \upsilon + \sigma$ \\
(G8) & $\m \circ (\Id_\G \times \i) = \u \circ \t$&$ \G$&${(\Id \times \i)}^* \mu - \i^* \tau = \t^* \upsilon + \tau$\\
(G9) & ${\!\begin{aligned} & \m\circ (\m (\pr_1 \times \pr_2) \times \pr_3) = \\
                           & \m \circ (\pr_1 \times \m(\pr_2 \times \pr_3))
                           \end{aligned}}$ & $\G^{(3)}$ &  see (\ref{eqn:assocgauge}) below.\\
                           \noalign{\smallskip}\hline
\end{tabular}
\end{center}
Now suppose we are supplied with 2-forms $\sigma$ and $\tau$ satisfying (i) and (ii) from~\ref{lemma:dliedata}.
Take the gauge equations from (G1-G3) to be the definitions of $\upsilon$, $\mu$ and $\iota$.
Let $L_\G := \s^* L_M - \sigma$.
We must show that $\G$ and $M$ together with $(\s,\sigma),(\t,\tau),(\u,\upsilon),(\i,\iota)$ constitutes a well defined D-Lie groupoid.
Assumption (i) implies that $(\s,\sigma)$ and $(\t,\tau)$ are well defined morphisms in $\DMan$.
A careful calculation shows that the remaining maps are also morphisms of Dirac structures.
It remains to show that the each gauge equation in the above table holds.

The equations from (G1-G3) follow immediately by definition.
The equation for (G4) holds since
\[ \u^*(\iota) = \u^*(\tau - \sigma) = 0. \]
The first equality follows from (G3) while the second follows from the fact that $\tau-\sigma$ is multiplicative.

Next we show (G5) by computing directly.
\begin{align*}
{((\u \circ \t) \times \Id)}^* \mu  &={((\u \circ \t) \times \Id)}^*( \pr_1^* \sigma + \pr_2^* \sigma - \m^* \sigma) \\
                                  &= {(\u \circ \t)}^* \sigma + \sigma - {(\m ((\u \circ \t) \times \Id))}^* \sigma \\
                                  &= {(\u \circ \t)}^* \sigma + \sigma - \sigma = {(\u \circ \t)}^* \sigma \, .
\end{align*}
It follows from the multiplicativity of $\tau - \sigma$ that
\begin{equation}
\m^* \tau + \mu = \pr_1^* \tau + \pr_2^* \tau \, .
\end{equation}
By using this expression for $\mu$ we can show (G6) by a calculation essentially identical to (G5).
Next up, we show (G7):
\begin{align*}
{(\i \times \Id)}^* \mu - \i^* \sigma &= {(\i \times \Id)}^* (\pr_1^* \sigma + \pr_2^* \sigma - \m^* \sigma) - \i^* \sigma \\
&= \i^* \sigma + \sigma - {(\u \circ \s)}^* \sigma - \i^* \sigma \\
&= -\s^* \u^* \sigma + \sigma = \s^* \iota + \sigma
\end{align*}
Since (G8) is similar we can proceed to (G9).
The gauge equation for (G9) is
\begin{equation}\label{eqn:assocgauge}
\begin{aligned}
{(\pr_1 \times \pr_2)}^* \mu + {(\m \circ (\pr_1 \times \pr_2) \times \pr_3)}^* \mu &= \\
{(\pr_2 \times \pr_3)}^* \mu + {(\pr_1 \times \m \circ (\pr_2 \times \pr_3))}^* \mu & \,.
\end{aligned}
\end{equation}
If we apply the substitution $\mu = \pr_1^* \sigma + \pr_2^* \sigma - \m^* \sigma$ throughout, we get:
\begin{align*}
\pr_1^* \sigma + \pr_2^* \sigma-{(\pr_1 \times \pr_2)}^*\m^* \sigma + {(\pr_1 \times \pr_2)}^*\m^* \sigma + \pr_3^* \sigma - \mathbf{A}_L^* \sigma &= \\
\pr_2^* \sigma + \pr_3^* \sigma-{(\pr_2 \times \pr_3)}^*\m^* \sigma + {(\pr_2 \times \pr_3)}^*\m^* \sigma + \pr_1^* \sigma - \mathbf{A}_R^* \sigma \, . &
\end{align*}
Here $\mathbf{A}_L, \mathbf{A}_R: \G^{(3)} \to \G$ are the left and right hand associativity maps.
Since $\G$ is a Lie groupoid and assumed to be associative, it follows immediately that (9) holds.
\qed\end{proof}
\bibliography{articledb}
\end{document}